\documentclass[11pt]{article}
\usepackage{amsmath, amssymb, amscd, amsthm, amsfonts}
\usepackage{graphicx}
\usepackage{hyperref}
\usepackage{enumerate}
\oddsidemargin 0pt
\evensidemargin 0pt
\marginparwidth 40pt
\marginparsep 10pt
\topmargin -20pt
\headsep 7pt
\textheight 9in
\textwidth 6.8in
\linespread{1.2}

\usepackage{dsfont}	
\usepackage{indentfirst}	
	
\usepackage{latexsym}

\title{ \textbf{Existence, asymptotic behaviors, and high-dimensional uniqueness of topological solutions to the skew-symmetric  Chern-Simons system on lattice graphs}}

\newtheorem{theorem}{Theorem}[section]

\newtheorem{lemma}[theorem]{Lemma}
\newtheorem{corollary}[theorem]{Corollary}

\newtheorem{prop}[theorem]{Proposition}

\newcommand{\tm}{\begin{theorem}}
\newcommand{\tmd}{\end{theorem}}
\newcommand{\co}{\begin{corollary}}
\newcommand{\cod}{\end{corollary}}
\newcommand{\prp}{\begin{prop}}
\newcommand{\prpd}{\end{prop}}

\usepackage{mathrsfs}	
\usepackage{titletoc}	
\numberwithin{equation}{section}

\newcommand{\ra}{\rightarrow}

\renewcommand{\l}{\lambda}	
	
\newcommand{\be}{\begin{equation}}	
\renewcommand{\ra}{\rightarrow}	
\newcommand{\ee}{\end{equation}}	
\newcommand{\bea}{\begin{eqnarray}}	
\newcommand{\eea}{\end{eqnarray}}	
\newcommand{\bna}{\begin{eqnarray*}}	
\newcommand{\ena}{\end{eqnarray*}}

\begin{document}
\author{Honggang Liu \footnote{Email address:hgliu23@m.fudan.edu.cn }}
\date{}	
\maketitle	

\begin{center}	
School of Mathematical Sciences, Fudan University, Shanghai, China	
\end{center}	

\date{}
\maketitle
\begin{abstract}
In this paper, we consider the topological 
solutions to the skew-symmetric  Chern-Simons system on lattice graphs:
$$\left\{\begin{aligned}		
	\Delta u &=\lambda\mathrm{e}^{\upsilon}(\mathrm{e}^{u}-1)+4\pi\sum\limits_{j=1}^{k_1}m_j\delta_{p_j},\\	
	\Delta \upsilon&=\lambda\mathrm{e}^{u}(\mathrm{e}^{\upsilon}-1)+4\pi\sum\limits_{j=1}^{k_2}n_j\delta_{q_j},	
\end{aligned}		
\right.		
$$	
here, $\lambda\in\mathbb{R}_+$, $k_1$ and $k_2$ are two positive integers,  $m_j\in\mathbb{N}\, (j=1,2,\cdot\cdot\cdot,k_1)$, $n_j\in\mathbb{N}\,(j=1,2,\cdot\cdot\cdot,k_2)$, and $\delta_{p}$ denotes the Dirac mass at vertex $p$. Write
$$g=4\pi\sum_{j=1}^{k_1}m_j\delta_{p_j},\ h=4\pi\sum_{j=1}^{k_2}n_j\delta_{q_j},\ B=4\pi\sum_{j=1}^{k_1}m_j+4\pi\sum_{j=1}^{k_2}n_j.$$
For any fixed $g,h$, we prove the existence of the topological solutions to the systems, then obtain the asymptotic behaviors of topological solutions as $\l \ra 0_+$ and $\l \ra +\infty$, and finally prove the uniqueness of the topological solutions when the dimension of lattice graph $\mathbb{Z}^n$ is large enough or $\lambda$ is large enough.
\end{abstract}

\textbf{Keywords}: Chern-Simons system; lattice graph; topological solution; Green’s function	
	
\textbf{Mathematics Subject Classification}:  35A01, 35A16, 35J91, 35R02.

\section{Introduction}\label{section-introduction}

The Chern-Simons-Higgs model, introduced by Hong, Kim, Pac \cite{introchern-simons1} and Jackiw, Weinberg \cite{introchern-simons2}, has always attracted the attention of many mathematicians in the fields of geometry and physics (see for examples\cite{introchern-simons3, introchern-simons4}). The model has also been an important topic in physics to study multiply distributed electrically and magnetically charged vortices(\cite{Back1},\cite{Back2},\cite{Back3},
\cite{Back4},\cite{Back5}).
In $\mathbb{R}^2$, many scholars have considered the skew-symmetric  Chern-Simons system:	
\be\label{uve}	
\left\{\begin{aligned}	
\Delta u &=\lambda\mathrm{e}^{\upsilon}(\mathrm{e}^{u}-1)+4\pi\sum\limits_{j=1}^{k_1}m_j\delta_{p_j},\\	
	\Delta \upsilon&=\lambda\mathrm{e}^{u}(\mathrm{e}^{\upsilon}-1)+4\pi\sum\limits_{j=1}^{k_2}n_j\delta_{q_j},		
\end{aligned}	
\right.	
\ee	
 where $\lambda\in\mathbb{R}_+$, $k_1$ and $k_2$ are two positive integers,  $m_j\in\mathbb{N}\, (j=1,2,\cdot\cdot\cdot,k_1)$, $n_j\in\mathbb{N}\,(j=1,2,\cdot\cdot\cdot,k_2)$, and $\delta_{p}$ denotes the Dirac mass at the vertex $p$. A solution of $(\ref{uve})$ is called topological if $u(x)\rightarrow0$ as $|x|\rightarrow+\infty$, and nontopological if $u(x)\rightarrow-\infty$ as $|x|\rightarrow+\infty$. This system arises from the self-dual equations for the relativistic Abelian Chern–Simons model with two Higgs fields and two gauge fields (\cite{Chern-Simonstheory1,Chern-Simonstheory2}). After a reduction process, the governing system of the self-dual equations for this model reduces to the system $(\ref{uve})$. The process can be referred to \cite{Chern-Simonstheory} in detail. The existence of topological solutions in $\mathbb{R}^2$ was also established in \cite{Chern-Simonstheory}, and see \cite{sup-subsolutions,sup-subsolutions-6,sup-subsolutions-7,sup-subsolutions-8,sup-subsolutions-9} for other related results.

 Recently, people have paid attention to the system $(\ref{uve})$ on finite graphs. In the paper \cite{JFA}, Huang, Wang and Yang studied the existence of maximal condensates, and also established the existence of multiple solutions, including a local minimizer for the 
 transformed energy functional and a mountain-pass-type solution. In the paper \cite{Topologicaldegree}, by calculating the topological degree, and using the relation between the degree and the critical group of a related functional, Li, Sun and Yang obtained multiple solutions of the system $(\ref{uve})$. Chao, Hou and Sun studied the existence of solutions to a generalized self-dual Chern-Simons system on finite graphs; see e.g. \cite{generalizedsystem}, and see \cite{generalizedsystem2,topologicaldegreget3solutions} for other related results.

 In paper \cite{Hua2023TheEO}, Hua, Huang and Wang considered topological solutions to the self-dual Chern-Simons vortex equation on $\mathbb{Z}^n$($n\geqslant2$):
\begin{equation*}
\left\{
\begin{aligned}
& \Delta u=\lambda e^u(e^u-1)+4\pi\sum_{j=1}^Mn_j\delta_{p_j}\ \ \text{on} \ \mathbb{Z}^n,\\
& \lim_{d(x)\rightarrow+\infty}u(x)=0.\\
\end{aligned}
\right.
\end{equation*}
They also constructed a topological solution, which is maximal among all possible solutions.
In this paper, we will consider the topological solutions of (\ref{uve}) on $\mathbb{Z}^n$.

Let $\mathbb{Z}^n= (V, E)$ be the integer lattice graph, where			
\[			
V = \left\{x: x = (x_1, \ldots, x_n)  \text{ where }  x_i \in \mathbb{Z} \text{ for each } 1\leq i\leq n\right\}\subseteq \mathbb{R}^n,			
\]			
\[			
E = \left\{ xy : x, y \in V \text{ such that } d(x,y) = 1 \right\},			
\] and \[			
d(x, y) = \sum_{i=1}^n \left| x_i - y_i \right|.			
\]			
We write $d(x)=d(x,0)$. For any function $u:\mathbb{Z}^n\rightarrow \mathbb{R}$, the $l^p$-norm of $u$ is defined as
\begin{equation*}
\|u\|_{l^p(\mathbb{Z}^n)}=\left\{
\begin{aligned}
& \left(\sum_{x\in\mathbb{Z}^n}|u(x)|^p\right)^{\frac{1}{p}},\ 1\leqslant p<\infty,\\
& \sup_{x\in\mathbb{Z}^n}|u(x)|, \ p=\infty.\\
\end{aligned}
\right.
\end{equation*}
Let $e_i$ be the vector whose $i$-th component is 1 and the others are 0.
The Laplacian is defined as
$$\Delta u(x) = \sum_{y\sim x}(u(y)-u(x))=\sum_{i=1}^n(h(x+e_i)+h(x-e_i)-2h(x)), \forall x\in\mathbb{Z}^{n},$$
where $y\sim x$ means $xy\in E.$

In the following we mainly consider topological solutions to the skew-symmetric  Chern-Simons system on $\mathbb{Z}^n$($n\geqslant2$):
\begin{equation}
\label{uvf}
\left\{
\begin{aligned}
&\Delta u =\lambda\mathrm{e}^{\upsilon}(\mathrm{e}^{u}-1)+4\pi\sum\limits_{j=1}^{k_1}m_j\delta_{p_j} \ \ \ \ \text{on} \ \mathbb{Z}^n,\\	
	&\Delta \upsilon=\lambda\mathrm{e}^{u}(\mathrm{e}^{\upsilon}-1)+4\pi\sum\limits_{j=1}^{k_2}n_j\delta_{q_j} \ \  \ \ \text{on} \ \mathbb{Z}^n,
\\
 &\lim_{d(x)\rightarrow+\infty}u(x)=0,\lim_{d(x)\rightarrow+\infty}v(x)=0,\\
\end{aligned}
\right.
\end{equation}
where $\lambda\in\mathbb{R}_+$, $k_1$ and $k_2$ are two positive integers, $m_j\in\mathbb{N}\, (j=1,2,\cdot\cdot\cdot,k_1)$, $n_j\in\mathbb{N}\,(j=1,2,\cdot\cdot\cdot,k_2)$ and $\delta_{p}$ denotes the Dirac mass at vertex $p$. Recall that
\begin{equation}
\label{constantB}
\begin{aligned}
g=4\pi\sum_{j=1}^{k_1}m_j\delta_{p_j},\ h=4\pi\sum_{j=1}^{k_2}n_j\delta_{q_j},\ B=4\pi\sum_{j=1}^{k_1}m_j+4\pi\sum_{j=1}^{k_2}n_j.
\end{aligned}
\end{equation}

In the papar, our first theorem extends the results in \cite{Hua2023TheEO} from a single equation to system (\ref{uvf}). 
\tm\label{thm:main1}
System $(\ref{uvf})$ has a topological solution $(u^\ast,v^\ast)\in l^p(\mathbb{Z}^n)\times l^p(\mathbb{Z}^n)$ for $1\leqslant p\leqslant\infty$ and $n\geqslant 2$, which is maximal among all possible solutions and satisfies $$u^*\leqslant0,\ \ v^\ast\leqslant0\ \ \ \ \mathrm{on}\  \mathbb{Z}^n.$$ Furthermore, for any topological solution $(u,v)$ of $(\ref{uvf})$ and $\epsilon\in(0,1)$, $as \ d(x)\rightarrow+\infty,$
$$u= O(e^{-m(1-\epsilon) d(x)}),v= O(e^{-m(1-\epsilon) d(x)}),$$
where $m=\ln(1+\frac{\lambda}{2n})$.
\tmd
In the second theorem, we obtain the asymptotic behaviors of topological solutions as $\l \ra 0_+$ and $\l \ra +\infty.$ When $\l \ra +\infty$, this conclusion is consistent with the results in $\mathbb{R}^2$, see Lemma 3.1 in \cite{asymptoticR}. In the paper \cite{generalizedsystem2}, Hou and Kong also investigated the asymptotic behaviors of solutions to Chern-Simons system on finite graphs.
\begin{theorem}\label{thm1.2}
For fixed $g$ and $h$, let $(u_\lambda,v_\lambda)$ be a topological solution of $(\ref{uve})$ on $\mathbb{Z}^n$, then 
\begin{enumerate}[(a)]
\item If $\lambda>2B(2n+e^{4B})$, then 
$$u_\lambda(x)+v_\lambda(x)\geqslant \ln(1-\frac{2B}{\lambda}), \ \ \forall x\in \mathbb{Z}^n,$$
where $B$ is given by $(\ref{constantB})$. Consequently, $u_\lambda,\ v_\lambda \rightarrow 0\ \ as \ \lambda\rightarrow+\infty.$
\item 
If $n=2,$ then
\begin{equation*}
\lim_{\lambda\rightarrow 0_+}(u_\lambda(x),v_\lambda(x))=\left\{
\begin{aligned}
 (-\infty,-\infty) \ \ \ \     &\text{if} \ \ g\not\equiv0,h\not\equiv0,\\
 (-\infty,0) \ \ \ \ \ \quad    &\text{if} \ \ g\not\equiv0,h\equiv0,\\
 (0,-\infty) \ \ \ \ \ \quad    &\text{if} \ \ g\equiv0,h\not\equiv0,\\
 (0,0) \ \ \ \ \quad \ \  \quad  &\text{if} \ \ g\equiv0,h\equiv0.\\
\end{aligned}
\right.
\end{equation*}

If $n\geqslant3,$ suppose $G_n$ is the Green’s function of $\Delta$ operator on $\mathbb{Z}^n$ $($see $(\ref{e:Green function})$ for definition$)$, then for $\forall x\in\mathbb{Z}^n,$ 
$$\lim_{\lambda\rightarrow 0_+}(u_\lambda(x),u_\lambda(x))=(4\pi\sum\limits_{j=1}^{k_1}m_jG_n(x-p_j),4\pi\sum\limits_{j=1}^{k_2}n_jG_n(x-q_j)).$$
\end{enumerate}
\tmd
So far, there have been few results on the uniqueness of the topological solution of (\ref{uve}). It is established that the topological solution for (\ref{uve}) in $\mathbb{R}^2$ is unique when the parameter $\lambda$ is either sufficiently large or sufficiently small(see $\cite{asymptoticR}$). This paper extends this result by proving the uniqueness of the topological solution for the lattice graph $\mathbb{Z}^n$ when the dimension $n$ is sufficiently large or when $\lambda$ is sufficiently large. Specifically, we establish the following theorem:
\tm\label{thm1.3}
There exist two constants $N(g,h)$ and $\lambda(g,h)$ such that if $n\geqslant N(g,h)$
or $\lambda\geqslant \lambda(g,h)$, then the topological solution of $(\ref{uvf})$ is unique.
\tmd

\section{Preliminaries}\label{section-Preliminaries}
We denote by $C(\mathbb{Z}^n)=\{u:\mathbb{Z}^n\rightarrow\mathbb{R}\}$ the set of functions on $\mathbb{Z}^n$, and by $supp(u)=\{x\in\mathbb{Z}^n:u(x)\neq0\}$ the support of $u$, and by $C_0(\mathbb{Z}^n)$ the set of functions with finite support.
For a finite subset $\Omega\subset\mathbb{Z}^n$, we define the boundary of $\Omega$ as
$$\delta\Omega:=\{y\in \mathbb{Z}^n\setminus \Omega:\exists x\in\Omega\ \text{such that}\ y\sim x\},$$
and write $\overline{\Omega}=\Omega\cup \delta\Omega$. For $u\in C(\Omega)$, the null extension to $\mathbb{Z}^n$ of $u$ is defined as
\begin{equation*}
\widetilde{u}(x)=\left\{
\begin{aligned}
u(x) \ \ \ \  \ &\mbox{in} \ \Omega,\\
0 \ \ \ \ \ \ \ \ \ \ &\text{in} \ \Omega^c.\\
\end{aligned}
\right.
\end{equation*}
We define the difference operator as
$$\nabla_{xy}u=u(y)-u(x),\ u\in C(\mathbb{Z}^n),\ \forall x,y\in\mathbb{Z}^n.$$
For $f,g\in C(\overline{\Omega})$, we introduce a bilinear form
$$D_{\Omega}(f,g):=\frac{1}{2}\sum_{\substack{x,y\in \Omega\\ x\sim y}}\nabla_{xy}f\nabla_{xy}g+\sum_{\substack{x\in \Omega,y\in \delta\Omega\\ x\sim y}}\nabla_{xy}f\nabla_{xy}g,$$
and we write $D_\Omega(f)=D_\Omega(f,f)$ for the Dirichlet energy of $f$ on $\Omega$. For $f\in C(\overline{\Omega})$, the directional derivative operator $\frac{\partial f}{\partial \vec{n}}$ at $x\in \delta\Omega$ is defined as
$$\frac{\partial f}{\partial \vec{n}}(x):=\sum_{\substack{y\in \Omega\\ x\sim y}}(f(x)-f(y)).$$
The followings are Green's identities on graphs, see e.g. \cite{bookanalysisgraphs}.
\begin{lemma}\label{lm1}
Let $f,g\in C(\mathbb{Z}^n)$ and $\Omega$ be a finite subset of $\mathbb{Z}^n$.
\begin{enumerate}[(a)]
\item If $f\in C_0(\mathbb{Z}^n)$, then
$$\frac{1}{2}\sum_{\substack{x,y\in \mathbb{Z}^n\\ x\sim y}}\nabla_{xy}f\nabla_{xy}g=-\sum_{x\in\mathbb{Z}^n}f(x)\Delta g(x).$$
\item $$D_\Omega(f,g)=-\sum_{x\in\Omega}f(x)\Delta g(x)+\sum_{x\in\delta\Omega}f(x)\frac{\partial g}{\partial \vec{n}}(x).$$
\end{enumerate}
\end{lemma}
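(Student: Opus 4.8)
The plan is to prove both identities by discrete summation by parts, treating (a) as the special case of (b) in which there is no boundary, and handling the bookkeeping of interior versus boundary edges with care.

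For part (a), I would begin by noting that since $f\in C_0(\mathbb{Z}^n)$ has finite support and every vertex of $\mathbb{Z}^n$ has exactly $2n$ neighbours, each of the sums below is in fact a finite sum, so all rearrangements are legitimate. I would then expand $\nabla_{xy}f\,\nabla_{xy}g=(f(y)-f(x))\nabla_{xy}g$ and split
$$\tfrac{1}{2}\sum_{\substack{x,y\in\mathbb{Z}^n\\x\sim y}}\nabla_{xy}f\,\nabla_{xy}g=\tfrac{1}{2}\sum_{\substack{x,y\in\mathbb{Z}^n\\x\sim y}}f(y)\nabla_{xy}g-\tfrac{1}{2}\sum_{\substack{x,y\in\mathbb{Z}^n\\x\sim y}}f(x)\nabla_{xy}g.$$
In the first sum I would relabel $x\leftrightarrow y$ (a bijection of the set of ordered adjacent pairs) and use the antisymmetry $\nabla_{yx}g=-\nabla_{xy}g$ to see that it equals $-\tfrac{1}{2}\sum f(x)\nabla_{xy}g$; the two halves then coincide, giving $-\sum_{x\sim y}f(x)\nabla_{xy}g$. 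Recognising $\sum_{y\sim x}\nabla_{xy}g=\Delta g(x)$ yields the claimed identity $-\sum_{x}f(x)\Delta g(x)$.

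For part (b) I would work from the right-hand side, where everything is a finite sum because $\Omega$, and hence $\overline{\Omega}=\Omega\cup\delta\Omega$, is finite. Writing $\Delta g(x)=\sum_{y\sim x}\nabla_{xy}g$ for $x\in\Omega$ and splitting the neighbours of $x$ into those in $\Omega$ and those in $\delta\Omega$, I would obtain
$$-\sum_{x\in\Omega}f(x)\Delta g(x)=-\sum_{\substack{x,y\in\Omega\\x\sim y}}f(x)\nabla_{xy}g-\sum_{\substack{x\in\Omega,\,y\in\delta\Omega\\x\sim y}}f(x)\nabla_{xy}g.$$
Applying the summation-by-parts step from part (a), now restricted to the purely interior edges (the relabeling $x\leftrightarrow y$ is still a bijection of the adjacent pairs in $\Omega\times\Omega$), the first term becomes exactly $\tfrac{1}{2}\sum_{x,y\in\Omega,\,x\sim y}\nabla_{xy}f\,\nabla_{xy}g$, the interior part of $D_\Omega(f,g)$. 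For the normal-derivative term I would unfold $\frac{\partial g}{\partial\vec{n}}(x)=\sum_{y\in\Omega,\,y\sim x}(g(x)-g(y))$ and reindex so that the interior vertex is called $x$ and the boundary vertex $y$, producing $\sum_{x\in\Omega,\,y\in\delta\Omega,\,x\sim y}f(y)\nabla_{xy}g$. Adding this to the remaining cross term above combines into $\sum_{x\in\Omega,\,y\in\delta\Omega,\,x\sim y}(f(y)-f(x))\nabla_{xy}g=\sum_{x\in\Omega,\,y\in\delta\Omega,\,x\sim y}\nabla_{xy}f\,\nabla_{xy}g$, which is precisely the boundary part of $D_\Omega(f,g)$; the two pieces together give $D_\Omega(f,g)$.

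The main obstacle, and the step requiring the most care, is the orientation bookkeeping for the edges joining $\Omega$ to $\delta\Omega$: such an edge contributes once to the cross term of $D_\Omega$ (with $x\in\Omega$, $y\in\delta\Omega$), but it enters $\sum_{x\in\Omega}f(x)\Delta g(x)$ through its interior endpoint and $\sum_{x\in\delta\Omega}f(x)\frac{\partial g}{\partial\vec{n}}(x)$ through its boundary endpoint, so one must verify that the two boundary contributions reassemble with the correct signs into $(f(y)-f(x))\nabla_{xy}g$. By contrast, the interior edges behave exactly as in part (a), and no convergence issue arises in (b) since all sums are finite.
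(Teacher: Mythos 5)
Your proof is correct and is essentially the argument the paper implicitly relies on: the paper states Lemma 2.1 without proof, citing a reference, and your discrete summation by parts (symmetrizing over ordered adjacent pairs via the relabeling $x\leftrightarrow y$ for the interior part, then recombining the cross term of $-\sum_{x\in\Omega}f(x)\Delta g(x)$ with the reindexed normal-derivative sum into $\sum_{x\in\Omega,\,y\in\delta\Omega,\,x\sim y}\nabla_{xy}f\,\nabla_{xy}g$) is the standard derivation. The orientation bookkeeping you single out is handled correctly, and your finiteness observations (finite support of $f$ plus degree $2n$ in (a), finiteness of $\overline{\Omega}$ in (b)) dispose of all convergence issues.
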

 The following maximum principle plays a crucial role in this paper.
\begin{lemma}\label{lm2}
Let $\Omega$ be a connected finite subset of $\mathbb{Z}^n$. For any nonnegative $f\in C(\overline{\Omega})$, suppose that a function $v\in C(\overline{\Omega})$ satisfies
\begin{equation*}
\left\{
\begin{aligned}
 (\Delta-f)v\geqslant 0\ \ \ \ \  &\text{in}\ \Omega,\\
 v\leqslant0\ \ \ \ \ \quad \ \quad \quad &\text{on}\ \delta\Omega .\\
\end{aligned}
\right.
\end{equation*}
Then $v\leqslant0$ on $\overline{\Omega}$.
\end{lemma}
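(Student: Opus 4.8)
The plan is to argue by contradiction, locating an interior maximum of $v$ and then propagating its value to the boundary via the connectedness of $\Omega$, where it collides with the sign hypothesis $v\leqslant0$ on $\delta\Omega$. Set $M:=\max_{\overline{\Omega}}v$, which is attained since $\overline{\Omega}=\Omega\cup\delta\Omega$ is finite. Suppose, toward a contradiction, that $M>0$. Because $v\leqslant0$ on $\delta\Omega$, the maximum cannot be attained on $\delta\Omega$, so it is attained at some interior vertex $x_0\in\Omega$.

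The first key step is a pointwise inspection at $x_0$. Since $v(y)\leqslant M=v(x_0)$ for every $y\sim x_0$, the definition of the Laplacian gives $\Delta v(x_0)=\sum_{y\sim x_0}(v(y)-v(x_0))\leqslant0$. On the other hand, the hypothesis $(\Delta-f)v\geqslant0$ at $x_0$, together with $f(x_0)\geqslant0$ and $v(x_0)=M>0$, yields $\Delta v(x_0)\geqslant f(x_0)v(x_0)\geqslant0$. Combining the two inequalities forces $\Delta v(x_0)=0$; and since each summand $v(y)-v(x_0)$ is nonpositive while their sum vanishes, every summand must vanish. Hence $v(y)=M$ for all neighbors $y$ of $x_0$.

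The second step propagates this equality throughout $\Omega$. Let $S:=\{x\in\overline{\Omega}:v(x)=M\}$, which is nonempty since $x_0\in S$. The computation above applies verbatim at any interior maximizer, so whenever $x\in S\cap\Omega$, every neighbor of $x$ in $\overline{\Omega}$ also lies in $S$. Because $\Omega$ is connected, a path argument starting from $x_0$ and repeatedly invoking this implication shows $\Omega\subseteq S$, i.e.\ $v\equiv M$ on $\Omega$. Now, since $\Omega$ is a nonempty finite subset of $\mathbb{Z}^n$, its boundary $\delta\Omega$ is nonempty, and by the definition of $\delta\Omega$ there exist $x\in\Omega$ and $y\in\delta\Omega$ with $y\sim x$. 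Applying the neighbor-equality at $x\in S\cap\Omega$ gives $v(y)=M>0$, contradicting $v\leqslant0$ on $\delta\Omega$. Therefore $M\leqslant0$, which is exactly $v\leqslant0$ on $\overline{\Omega}$.

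The only delicate point is the propagation step: one must check that connectedness of $\Omega$ genuinely carries the interior-maximum set out to a boundary-adjacent vertex, and that the rigidity extracted at each maximizer (all neighbor differences vanishing, forced by $f\geqslant0$ and $M>0$) is what transports the value $M$ along edges. Everything else is elementary and relies only on the definition of $\Delta$ and the sign of $f$.
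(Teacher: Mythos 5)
Your proposal is correct and follows essentially the same route as the paper's own proof: both argue by contradiction at an interior maximizer $x_0$ with $v(x_0)>0$, combine $\Delta v(x_0)\geqslant f(x_0)v(x_0)\geqslant 0$ with the maximality of $v(x_0)$ to force all neighbor values to equal the maximum, and then propagate this by connectedness until it collides with $v\leqslant 0$ on $\delta\Omega$. Your write-up is in fact slightly more careful than the paper's (you propagate only through interior maximizers, invoke connectedness of $\Omega$ rather than of $\overline{\Omega}$, and note explicitly that $\delta\Omega\neq\emptyset$ so a boundary-adjacent vertex is reached), but these are refinements of the same argument, not a different one.
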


\begin{proof}

We prove the result by contradiction. Suppose that there exists $x_0\in \Omega$ such that $v(x_0)=\sup_{y\in\overline{\Omega}}v(y)=c>0$. By the equation,
$$\Delta v(x_0)\geqslant f(x_0)v(x_0)\geqslant0.$$
This implies that for any $x\sim x_0,\ x\in\overline{\Omega}$, we have $v(x)=v(x_0)=c.$ By the connectivity of $\overline{\Omega}$, we can see that for any $x\in\overline{\Omega},$ $v(x)=v(x_0)=c>0,$
which yields a contradiction.
\end{proof}

By the above lemma, we have the following corollaries.

\co\label{co1}
For any nonnegative $f\in C(\mathbb{Z}^n)$, suppose that a function $v\in C(\mathbb{Z}^n)$ satisfies
\begin{equation*}
\left\{
\begin{aligned}
& (\Delta-f)v\geqslant 0\ \ \text{on}\ \mathbb{Z}^n,\\
& \lim_{d(x)\rightarrow+\infty}v(x)\leqslant0 .\\
\end{aligned}
\right.
\end{equation*}
Then $v\leqslant0$ on $\mathbb{Z}^n$.
\cod
\

\co\label{Extre}
Suppose that a function $v\in C(\mathbb{Z}^n)$ satisfies
\begin{equation*}
\left\{
\begin{aligned}
& \Delta v= 0\ \ \ \  \text{on}\ \mathbb{Z}^n,\\
& \lim_{d(x)\rightarrow+\infty}v(x)=0 .\\
\end{aligned}
\right.
\end{equation*}
Then $v=0$ on $\mathbb{Z}^n$.
\cod
\
The isoperimetric inequality is well-known on $\mathbb{Z}^n$, see e.g. \cite{isopermetric}, we denote by $|K|$ the cardinality of the set $K$.
\begin{lemma}\label{lm:iso}
There exists a constant $C_n$ depending on the dimension $n$, such that, for any finite graph $\Omega\subset\mathbb{Z}^n$, 
$$|\delta\Omega|\geqslant C_n|\Omega|^{\frac{n-1}{n}}.$$
\end{lemma}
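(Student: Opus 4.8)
The plan is to deduce the vertex-boundary isoperimetric inequality from the discrete Loomis--Whitney inequality together with an elementary bound relating each coordinate projection of $\Omega$ to its boundary $\delta\Omega$. For $1\leqslant i\leqslant n$, let $\pi_i:\mathbb{Z}^n\to\mathbb{Z}^{n-1}$ be the projection that deletes the $i$-th coordinate, and write $\pi_i(\Omega)$ for its image. The discrete Loomis--Whitney inequality asserts that
$$|\Omega|^{n-1}\leqslant\prod_{i=1}^n|\pi_i(\Omega)|,$$
which I would take as known (it follows from repeated application of the Cauchy--Schwarz/H\"older inequality to the indicator function of $\Omega$). Granting this, it suffices to bound each factor on the right by $|\delta\Omega|$.

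The key step is the projection bound $|\pi_i(\Omega)|\leqslant|\delta\Omega|$ for every $i$. To see this, fix $i$ and a point $z\in\pi_i(\Omega)$. The fiber $\{x\in\Omega:\pi_i(x)=z\}$ is a nonempty finite subset of the line through $z$ in direction $e_i$, so it contains a point $x_z$ of maximal $i$-th coordinate. Then $x_z+e_i\notin\Omega$ while $x_z+e_i\sim x_z\in\Omega$, so $x_z+e_i\in\delta\Omega$. Since $\pi_i(x_z+e_i)=z$, the assignment $z\mapsto x_z+e_i$ is injective on $\pi_i(\Omega)$ and takes values in $\delta\Omega$; hence $|\pi_i(\Omega)|\leqslant|\delta\Omega|$.

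Combining the two displays gives $|\Omega|^{n-1}\leqslant|\delta\Omega|^n$, that is, $|\delta\Omega|\geqslant|\Omega|^{\frac{n-1}{n}}$, which is the claim (indeed with $C_n=1$; only the existence of some constant is needed later, so its precise value is immaterial).

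I do not expect a serious obstacle, as the inequality is classical; the only point demanding care is that the statement concerns the \emph{vertex} boundary $\delta\Omega$ rather than the edge boundary, and this is precisely what the injective map $z\mapsto x_z+e_i$ handles. An alternative route would be to thicken $\Omega$ to the union of closed unit cubes $\Omega^\ast=\bigcup_{x\in\Omega}\bigl(x+[-\tfrac12,\tfrac12]^n\bigr)\subset\mathbb{R}^n$, apply the classical isoperimetric inequality in $\mathbb{R}^n$ to $\Omega^\ast$ (noting $\mathrm{Vol}(\Omega^\ast)=|\Omega|$), and then convert the resulting perimeter estimate into a vertex-boundary estimate using that each boundary vertex meets at most $2n$ edges; this detour, however, introduces an unnecessary dimensional constant and is less transparent than the Loomis--Whitney argument.
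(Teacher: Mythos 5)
Your proof is correct, but note that the paper itself does not prove this lemma at all: it is quoted as a known result with a citation to the literature (``see e.g.\ \cite{isopermetric}''), so there is no internal argument to compare against. Your Loomis--Whitney route is a standard and fully self-contained derivation, and both of its steps check out: the inequality $|\Omega|^{n-1}\leqslant\prod_{i=1}^n|\pi_i(\Omega)|$ is classical, and your projection bound $|\pi_i(\Omega)|\leqslant|\delta\Omega|$ is sound --- for each $z\in\pi_i(\Omega)$ the fiber is nonempty and finite, its $e_i$-maximal point $x_z$ gives $x_z+e_i\notin\Omega$ with $x_z+e_i\sim x_z\in\Omega$, so $x_z+e_i$ lies in the \emph{outer} vertex boundary exactly as $\delta\Omega$ is defined in Section 2 of the paper, and $\pi_i(x_z+e_i)=z$ forces injectivity. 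In fact your argument yields the inequality with the dimension-free constant $C_n=1$, which is slightly stronger than the statement (the paper only needs \emph{some} constant $C_n$, used in step $(iii)$ of the proof of Theorem 1.1, so nothing downstream is affected). Your remark about the alternative cube-thickening argument is also fair: it works but routes through the continuous isoperimetric inequality and a perimeter-to-vertex-boundary conversion, costing an inessential dimensional factor, whereas the projection argument is direct.
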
	
Also, we have the following representation formula for Green's function $G_{n}.$ The existence and uniqueness of the Green's function were rigorously established in Proposition 4.5.1 of \cite{bookgreenfunction}, while an explicit formulation is detailed in Section 1 of \cite{Greenfunctionend}.
\begin{lemma}\label{lm:Green function}
Let $n\geq3$, then there exists a unique $C(\mathbb{Z}^n)$-solution $G_n$ of
\begin{equation}
\label{Dire}
\left\{
\begin{aligned}
& \Delta u=\delta_0\  \ \text{on}\ \mathbb{Z}^n,\\
& \lim_{d(x)\rightarrow+\infty}u(x)=0 .\\
\end{aligned}
\right.
\end{equation}
Moreover, $G_{n}(x)$ has the following form:
\begin{equation}				
\label{e:Green function}				
     G_n(x) =				
     \frac{-1}{(2\pi)^n} \int_{[-\pi,\pi]^n} \frac{e^{iz\cdot x}}{2n -2 \sum_{j=1}^n \cos z_j} \mathrm{d} z
\end{equation}				
where $x \in \mathbb{Z}^n$, $z = (z_1,\ldots,z_n)\in \mathbb{R}^n$.
 We call $G_n$ the Green’s function of $\Delta$ operator on $\mathbb{Z}^n$.	
\end{lemma}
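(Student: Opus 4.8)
The plan is to dispose of uniqueness first, then to construct $G_n$ explicitly through the discrete Fourier transform (Fourier series on the torus) and verify each required property in turn. Uniqueness is immediate from Corollary \ref{Extre}: if $u_1,u_2$ are two $C(\mathbb{Z}^n)$-solutions of (\ref{Dire}), then $w=u_1-u_2$ satisfies $\Delta w=0$ on $\mathbb{Z}^n$ together with $\lim_{d(x)\to+\infty}w(x)=0$, so $w\equiv 0$.

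For existence I would identify a function $u$ on $\mathbb{Z}^n$ with the Fourier coefficients of a function $\hat u$ on the torus $\mathbb{T}^n=[-\pi,\pi]^n$, writing $u(x)=(2\pi)^{-n}\int_{\mathbb{T}^n}\hat u(z)e^{iz\cdot x}\,\mathrm{d}z$. A direct computation from the definition of $\Delta$ shows that its symbol under this correspondence is $-P(z)$, where $P(z):=2n-2\sum_{j=1}^n\cos z_j$, since $\sum_{i}(e^{iz_i}+e^{-iz_i}-2)=-P(z)$. As $\delta_0$ has constant symbol $1$ (indeed $\delta_0(x)=(2\pi)^{-n}\int_{\mathbb{T}^n}e^{iz\cdot x}\,\mathrm{d}z$ by orthogonality), the equation $\Delta u=\delta_0$ formally becomes $-P(z)\hat u(z)=1$, forcing $\hat u=-1/P$ and hence the candidate (\ref{e:Green function}).

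The main obstacle is to show this candidate is well-defined, and this is exactly where the hypothesis $n\geq 3$ enters. Writing $2-2\cos z_j=4\sin^2(z_j/2)$ gives $P(z)=\sum_{j=1}^n 4\sin^2(z_j/2)\geq 0$, vanishing on $[-\pi,\pi]^n$ only at the origin, near which $P(z)\sim|z|^2$. Thus $1/P$ has a $|z|^{-2}$ singularity at $0$, and in polar coordinates $\int_{|z|<\varepsilon}|z|^{-2}\,\mathrm{d}z\sim\int_0^\varepsilon r^{n-3}\,\mathrm{d}r$ converges precisely when $n\geq 3$. Hence for $n\geq 3$ one has $1/P\in L^1(\mathbb{T}^n)$, so the integral (\ref{e:Green function}) defines a finite value $G_n(x)$ for every $x\in\mathbb{Z}^n$; it is real because pairing $z$ with $-z$ annihilates the odd imaginary part $\sin(z\cdot x)/P(z)$, so $G_n\in C(\mathbb{Z}^n)$. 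The failure of this integrability at $n=2$ mirrors the recurrence of the lattice random walk and is the crux of the dimensional restriction.

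It remains to verify the two defining properties. To confirm $\Delta G_n=\delta_0$, I apply the finite-difference operator directly: $\Delta G_n(x)$ is a finite combination of the convergent integrals $G_n(x\pm e_i)$ and $G_n(x)$, which may be collected under one integral sign, after which the bracket $\sum_i(e^{iz_i}+e^{-iz_i}-2)=-P(z)$ cancels the denominator $-P(z)$ and leaves $\Delta G_n(x)=(2\pi)^{-n}\int_{\mathbb{T}^n}e^{iz\cdot x}\,\mathrm{d}z=\delta_0(x)$; note that after cancellation the integrand is bounded, so no integrability difficulty arises at this step. Finally, since $1/P\in L^1(\mathbb{T}^n)$, the Riemann--Lebesgue lemma yields $G_n(x)\to 0$ as $|x|\to\infty$, equivalently as $d(x)\to+\infty$, which is the decay condition. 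Combined with uniqueness, this completes the argument.
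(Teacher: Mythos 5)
Your proof is correct, but it takes a genuinely different route from the paper in an unusual sense: the paper offers no proof of Lemma \ref{lm:Green function} at all, delegating existence and uniqueness to Proposition 4.5.1 of \cite{bookgreenfunction} and the explicit representation (\ref{e:Green function}) to \cite{Greenfunctionend}. Your Fourier-analytic argument makes the lemma self-contained, and each step checks out. Uniqueness is exactly Corollary \ref{Extre} applied to the difference of two solutions. The symbol computation $\Delta e^{iz\cdot x}=-P(z)e^{iz\cdot x}$ with $P(z)=2n-2\sum_{j=1}^n\cos z_j=4\sum_{j=1}^n\sin^2(z_j/2)$ is right, and the lower bound $P(z)\geqslant\frac{4}{\pi^2}|z|^2$ on $[-\pi,\pi]^n$ (the same elementary inequality $1-\cos z_j\geqslant\frac{2}{\pi^2}z_j^2$ that the paper itself invokes later in the proof of Lemma \ref{lm:small enough}) shows that $1/P\in L^1([-\pi,\pi]^n)$ precisely when $n\geqslant3$, so (\ref{e:Green function}) is well defined, and real by evenness of $P$. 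Since the finitely many integrals defining $G_n(x\pm e_i)$ and $G_n(x)$ all converge absolutely, linearity lets you collect them under one integral and cancel $\sum_i(e^{iz_i}+e^{-iz_i}-2)=-P(z)$ against the denominator, leaving $\Delta G_n(x)=(2\pi)^{-n}\int_{[-\pi,\pi]^n}e^{iz\cdot x}\,\mathrm{d}z=\delta_0(x)$ by orthogonality of characters; and the decay condition is the Riemann--Lebesgue lemma for the Fourier coefficients of the $L^1$ torus function $1/P$, with $d(x)\to+\infty$ equivalent to $|x|\to\infty$ since the $\ell^1$ and $\ell^2$ norms on $\mathbb{Z}^n$ are comparable. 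What your approach buys is transparency about exactly where $n\geqslant3$ enters (integrability of the $|z|^{-2}$ singularity, i.e., transience of the lattice walk); what the paper's citation buys is brevity and access to finer information (such as pointwise asymptotics of $G_n$) that the bare Fourier argument does not provide but which the paper never uses. One cosmetic remark: your derivation ``forcing $\hat u=-1/P$'' is only heuristic, but since you verify the candidate directly afterwards, nothing rests on it.
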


\section{Proof of Theorem 1.1}\label{section-Proof of theorem 1.1}
The proof follows the methods in \cite{Hua2023TheEO}.
To prove Theorem~\ref{thm:main1}, we first consider an iterative sequence on a finite subset of $\mathbb{Z}^n$.
Let $\Omega_0$ be a finite subset of $\mathbb{Z}^n$, satisfying $\Omega_0\supset (\{p_j\}_{j=1}^{k_1} \cup \{q_j\}_{j=1}^{k_2}) $, and $\Omega$ be an arbitrary connected finite subset such that $\Omega_0\subset\Omega\subset\mathbb{Z}^n$. We choose a constant $L>2\lambda>0$. Let $u_0=v_0=0$ and consider the following iterative equations,
\begin{equation}
\label{e:uk}
\left\{
\begin{aligned}
& (\Delta-L) u_k=\lambda e^{v_{k-1}}(e^{u_{k-1}}-1)+g-Lu_{k-1}\ \ \  \text{in} \ \Omega,\\
& (\Delta-L) v_k=\lambda e^{u_{k-1}}(e^{v_{k-1}}-1)+h-Lv_{k-1}\ \ \ \text{in} \ \Omega,\\
& u_k =v_k=0\ \ \  \text{on} \ \delta\Omega.\\
\end{aligned}
\right.
\end{equation}

\begin{lemma}\label{lm4}
Let sequences $\{u_k\}$ and $\{v_k\}$ be given in $(\ref{e:uk})$. Then for each $k$, $u_k$ and $v_k$ are uniquely defined and
$$0=u_0\geqslant u_1\geqslant u_2\geqslant\ldots,$$
$$0=v_0\geqslant v_1\geqslant v_2\geqslant\ldots.$$
\end{lemma}

\begin{proof}
We use mathematical induction to prove this lemma. First we have
\begin{equation*}
\left\{
\begin{aligned}
& (\Delta-L) u_1=g\ \ \text{in} \ \Omega,\\
& u_1 =0\ \ \quad\quad\quad\ \text{on} \ \delta\Omega.\\
\end{aligned}
\right.
\end{equation*}
One easily sees the existence and uniqueness of the solution $u_1$ on $\Omega$. Using Lemma~\ref{lm2}, we obtain that $u_1\leqslant0$, similarly, $v_1\leqslant0$

Suppose that $0=u_0\geqslant u_1\geqslant u_2\geqslant\ldots\geqslant u_{i}$ and $0=v_0\geqslant v_1\geqslant v_2\geqslant\ldots\geqslant v_{i}$. Since
$$\lambda e^{v_i}(e^{u_i}-1)+g-Lu_i\in l^2(\Omega),\lambda e^{u_i}(e^{v_i}-1)+g-Lv_i\in l^2(\Omega),$$
we have the existence and uniqueness of the solution $(u_{i+1},v_{i+1})$. From $(\ref{e:uk})$, we get
\begin{equation*}
\begin{aligned}
(\Delta-L)(u_{i+1}-u_i)&=\lambda[e^{v_i}(e^{u_i}-1)-e^{v_{i-1}}(e^{u_{i-1}}-1)]-L(u_i-u_{i-1})\\
&\geqslant \lambda [e^{v_{i-1}}(e^{u_i}-1)-e^{v_{i-1}}(e^{u_{i-1}}-1)]-L(u_i-u_{i-1})\\
&= \lambda e^{v_{i-1}}(e^{u_i}-e^{u_{i-1}})-L(u_i-u_{i-1})\\
&=\lambda e^{v_{i-1}+\xi_{i-1}}(u_i-u_{i-1})-L(u_i-u_{i-1})\\
&=(\lambda e^{v_{i-1}+\xi_{i-1}}-L)(u_i-u_{i-1})\\
&\geqslant 0,
\end{aligned}
\end{equation*}
where $\xi_{i-1}$ is a function satisfying $u_i\leqslant\xi_{i-1}\leqslant u_{i-1}$. From this and Lemma~\ref{lm2}, we conclude immediately that $u_{i+1}\leqslant u_i.$ Similarly, we have  $v_{i+1}\leqslant v_i$. So, by mathematical induction, we have proved this lemma.
\end{proof}

The following lemma proves that the monotone sequences $\{u_k\}$ and $\{v_k\}$ are convergent.
\begin{lemma}\label{lm3.2}
Let $\{u_k\}$ and $\{v_k\}$ be the sequences defined by $(\ref{e:uk})$. Then, there holds
$$u_k\rightarrow u_\Omega, \  
 v_k\rightarrow v_\Omega \ \text{on}\ \overline{\Omega},$$
where $u_\Omega,v_\Omega$ satisfy
\begin{equation}
\label{e:3.2}
\left\{
\begin{aligned}
& \Delta u_\Omega=\lambda e^{v_\Omega}(e^{u_\Omega}-1)+g \ \ \text{in} \ \Omega,\\
& \Delta v_\Omega=\lambda e^{u_\Omega}(e^{v_\Omega}-1)+h \ \ \text{in} \ \Omega,\\
& u_\Omega =v_\Omega=0\ \ \text{on} \ \delta\Omega.
\end{aligned}
\right.
\end{equation}
\end{lemma}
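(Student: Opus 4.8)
The plan is to upgrade the pointwise monotonicity already established in Lemma~\ref{lm4} to genuine convergence by producing a uniform-in-$k$ lower barrier for both sequences. Since $\overline{\Omega}$ is a finite set, pointwise convergence at each node is all that is required, and once the limiting functions exist they can be fed directly back into the difference equation. Concretely, I would first exhibit a single sub-solution $(w,w)$ of the whole system, then show $u_k\geqslant w$ and $v_k\geqslant w$ for all $k$ by induction, and finally pass to the limit in $(\ref{e:uk})$.

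To build the barrier, let $w\in C(\overline{\Omega})$ be the unique solution of the linear Dirichlet problem $\Delta w=g+h$ in $\Omega$ with $w=0$ on $\delta\Omega$; uniqueness and existence hold because the Dirichlet Laplacian on the finite connected set $\Omega$ is invertible. Since $g+h\geqslant0$, applying Lemma~\ref{lm2} with $f\equiv0$ gives $w\leqslant0$. The crucial observation is that the coupled nonlinearities $\lambda e^{v}(e^{u}-1)$ and $\lambda e^{u}(e^{v}-1)$ are both nonpositive whenever $u,v\leqslant0$, so the crude source $g+h$ dominates each individual equation and this one linear solve simultaneously serves as a sub-solution for both.

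For the induction, the base case is immediate because $u_0=v_0=0\geqslant w$. Assuming $u_{k-1}\geqslant w$ together with $u_{k-1}\leqslant0$ (the latter from Lemma~\ref{lm4}), I would compute from $(\ref{e:uk})$ that
\[
(\Delta-L)(u_k-w)=\lambda e^{v_{k-1}}(e^{u_{k-1}}-1)-h-L(u_{k-1}-w).
\]
Each summand on the right is $\leqslant0$: the first since $u_{k-1}\leqslant0$, the second since $h\geqslant0$, and the third since $u_{k-1}\geqslant w$. Hence $(\Delta-L)(w-u_k)\geqslant0$ in $\Omega$, while $w-u_k=0$ on $\delta\Omega$, so Lemma~\ref{lm2} (with $f\equiv L\geqslant0$) yields $w\leqslant u_k$; the symmetric computation gives $w\leqslant v_k$, closing the induction.

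With $w\leqslant u_k\leqslant0$ and $w\leqslant v_k\leqslant0$ for every $k$, the decreasing sequences of Lemma~\ref{lm4} are bounded below, hence converge pointwise on the finite set $\overline{\Omega}$ to limits $u_\Omega,v_\Omega$ with $u_\Omega=v_\Omega=0$ on $\delta\Omega$. Because $\overline{\Omega}$ is finite, $\Delta$ and the exponential nonlinearities depend continuously on the finitely many nodal values, so letting $k\to\infty$ in $(\ref{e:uk})$ and cancelling the $Lu_k$ and $Lv_k$ terms produces exactly system $(\ref{e:3.2})$. I expect the convergence and limit-passage to be entirely mechanical; the only step that requires any ingenuity is the choice of the lower barrier, and the sign argument making $(w,w)$ a sub-solution is the step I would write out most carefully.
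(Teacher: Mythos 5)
Your proposal is correct, but it establishes the key boundedness step by a genuinely different mechanism than the paper. You build a single lower barrier $w$ solving $\Delta w=g+h$ in $\Omega$, $w=0$ on $\delta\Omega$, and propagate $w\leqslant u_k$, $w\leqslant v_k$ by induction via the maximum principle (Lemma~\ref{lm2}); your displayed identity and the three sign checks are all valid (note the first term only needs $u_{k-1}\leqslant0$, since $e^{v_{k-1}}>0$ regardless of the sign of $v_{k-1}$, so the coupling causes no trouble), the base case uses $w\leqslant0$, and the limit passage on the finite set $\overline{\Omega}$ is indeed mechanical, with the $L$-terms cancelling. The paper instead sums the equation over $\Omega$ and applies Green's identity (Lemma~\ref{lm1}) to obtain the boundary-layer estimate $\sum_{x\in B(\Omega)}|u_k(x)|<B$, then propagates boundedness inward vertex by vertex using the equation, the uniform bound on $|\Delta u_k|$, and the connectivity of the finite set $\Omega$. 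Your barrier argument is arguably cleaner and more self-contained for a fixed $\Omega$; in fact it is essentially the comparison computation that the paper later isolates as Lemma~\ref{lm3.3}, applied to the particular sub-solution pair $(w,w)$, so you have anticipated that lemma. What the paper's route buys is uniformity in the domain: the boundary-layer bound $B$ is independent of $\Omega$, and exactly this estimate is recycled in the proof of Theorem~\ref{thm:main1} along the exhaustion $\Omega_i\uparrow\mathbb{Z}^n$. Your barrier $w=w_\Omega$ does not provide such uniformity --- indeed on $\mathbb{Z}^2$ the analogous linear solves $\omega_d$ tend to $-\infty$ as the domain grows, as the paper's Section 4 computation shows --- but for Lemma~\ref{lm3.2} as stated, with $\Omega$ fixed and finite, your argument is complete and correct.
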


\begin{proof}
Since $\{u_k\}$ and $\{v_k\}$ are monotone sequences, the pointwise limit $(u_\Omega,v_\Omega)$ of $(u_k,v_k)$ exists. It suffices to show that $u_\Omega$ and $v_\Omega$ are bounded. We first consider the set
$$B(\Omega)=\{x\in \Omega: \exists y\in \delta\Omega\ \text{such that}\ y\sim x\}.$$
Summing over $\Omega$ in $(\ref{e:uk})$, and by Lemma~\ref{lm1} we obtain
\begin{equation*}
\begin{aligned}
&\sum_{x\in \delta\Omega}\frac{\partial u_k}{\partial \vec{n}}(x)+\lambda\sum_{x\in\Omega}e^{v_k}(1-e^{u_k})\\
=&\sum_{x\in\Omega}g(x)+L\sum_{x\in\Omega}(u_k(x)-u_{k-1}(x))\leqslant4\pi\sum_{j=1}^{d_1}m_j<B.\\
\end{aligned}
\end{equation*}
This yields that
$$\sum_{x\in B(\Omega)}|u_k(x)|< B.$$
In particular, for $\forall x\in B(\Omega)$, the sequence $\{u_k(x)\}$ is uniformly bounded.

If $x_1\sim x_0$, $x_1\in\Omega$ and $x_0\in B(\Omega)$, 
we deduce that
$$\Delta u_k(x_0)=\sum_{y\sim x_0}(u_k(y)-u_k(x_0))\leqslant u_k(x_1)-2nu_k(x_0)\leqslant u_k(x_1)+2nB$$
and $u_k(x_1)<0$.
The equation (\ref{e:uk}) at $x_0$ shows that
\begin{equation*}
\begin{aligned}
|\Delta u_k(x_0)|&\leqslant L|u_k(x_0)-u_{k-1}(x_0)|+\lambda|e^{v_{k-1}}(e^{u_{k-1}}-1)|+|g(x_0)|\\
&\leqslant L|u_k(x_0)|+\lambda+B\leqslant (L+1)B+\lambda.
\end{aligned}
\end{equation*}
 Hence, the sequence $\{u_k(x_1)\}$ is uniformly bounded.

Since $\Omega$ is finite and connected, by repeating the above process a finite number of times, it can be concluded that $\{u_k(x)\}$ is uniformly bounded on $\Omega$.
 Similarly, $\{v_k(x)\}$ is also uniformly bounded on $\Omega$. In conclusion, this lemma is proved.
\end{proof}

Let $\Omega_i$, $1\leqslant i<\infty$, be finite and connected subsets, satisfying
$$\Omega_0\subset \Omega_1\subset\ldots\subset \Omega_k\subset\ldots,\ \ \bigcup_{i=1}^\infty\Omega_i=\mathbb{Z}^n.$$
We denote $u^i =u_{\Omega_i}$ for simplicity. To prove Theorem~\ref{thm:main1}, we also need the following lemma:

\begin{lemma}\label{lm3.3}
Let $\Omega(\supset\Omega_0)$ be a finite subset of $\mathbb{Z}^n$, and $\{u_k\}$ and $\{v_k\}$ be the sequence defined by $(\ref{e:uk})$. Suppose $V\in C(\overline{\Omega})$ satisfying
\begin{equation*}
\left\{
\begin{aligned}
& \Delta W\geqslant\lambda e^V(e^W-1)+g\ \ \text{in} \ \Omega,\\
& \Delta V\geqslant\lambda e^W(e^V-1)+h\ \ \text{in} \ \Omega,\\
& W(x) \leqslant0\ \ \text{on} \ \delta\Omega,\\
& V(x) \leqslant0\ \ \text{on} \ \delta\Omega,\\
\end{aligned}
\right.
\end{equation*}
then
$$0=u_0\geqslant u_1\geqslant\ldots\geqslant u_k\geqslant\ldots\geqslant u_\Omega\geqslant W,$$
$$0=v_0\geqslant v_1\geqslant\ldots\geqslant v_k\geqslant\ldots\geqslant v_\Omega\geqslant V.$$
\end{lemma}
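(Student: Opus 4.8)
The goal is to prove Lemma~\ref{lm3.3}: that the iterative sequence $\{u_k\},\{v_k\}$ defined by $(\ref{e:uk})$ dominates any subsolution pair $(W,V)$, i.e.\ $u_k\geqslant W$ and $v_k\geqslant V$ for all $k$, hence $u_\Omega\geqslant W$ and $v_\Omega\geqslant V$ in the limit. Since Lemma~\ref{lm4} already establishes the monotone chain $0=u_0\geqslant u_1\geqslant\cdots$ and $0=v_0\geqslant v_1\geqslant\cdots$, the only new content is the comparison against $(W,V)$. The plan is to prove $u_k\geqslant W$ and $v_k\geqslant V$ simultaneously by induction on $k$, using the maximum principle in the form of Lemma~\ref{lm2} at each step, and then pass to the pointwise limit.

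For the base case $k=0$ we have $u_0=0\geqslant W$ and $v_0=0\geqslant V$, since $W\leqslant 0$ and $V\leqslant 0$ on $\overline{\Omega}$. Indeed on $\delta\Omega$ this is the hypothesis, and in $\Omega$ the subsolution inequalities together with Lemma~\ref{lm2} force $W,V\leqslant 0$: from $\Delta W\geqslant\lambda e^V(e^W-1)$ one checks that $(\Delta-f)W\geqslant 0$ for a suitable nonnegative $f$ obtained by the mean value theorem applied to $e^W-1$, so the comparison gives $W\leqslant 0$, and likewise for $V$. For the inductive step, I would assume $u_{k-1}\geqslant W$ and $v_{k-1}\geqslant V$ and estimate $(\Delta-L)(u_k-W)$. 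Using the $u_k$-equation in $(\ref{e:uk})$ and the subsolution inequality for $W$,
\begin{equation*}
(\Delta-L)(u_k-W)\leqslant \lambda e^{v_{k-1}}(e^{u_{k-1}}-1)-\lambda e^V(e^W-1)-L(u_{k-1}-W).
\end{equation*}
The key is to show the right-hand side is $\leqslant 0$. I split the difference $\lambda e^{v_{k-1}}(e^{u_{k-1}}-1)-\lambda e^{V}(e^{W}-1)$ into two monotonicity comparisons — one in the first variable, one in the second — applying the mean value theorem to $e^{u_{k-1}}-e^{W}$ and exploiting the sign of $e^{u_{k-1}}-1\leqslant 0$ together with $v_{k-1}\geqslant V$, exactly paralleling the computation in the proof of Lemma~\ref{lm4}. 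The choice $L>2\lambda$ is what guarantees that after absorbing the $-L(u_{k-1}-W)$ term the coefficient $\lambda e^{(\cdot)}-L$ is negative on the relevant range, so that $(\Delta-L)(u_k-W)\leqslant 0$, i.e.\ $\bigl(\Delta-L\bigr)\bigl(-(u_k-W)\bigr)\geqslant 0$; since $u_k-W\geqslant 0$ on $\delta\Omega$, Lemma~\ref{lm2} applied to $W-u_k$ yields $u_k\geqslant W$ on $\overline{\Omega}$. The symmetric argument gives $v_k\geqslant V$.

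The main obstacle is bookkeeping the sign of the nonlinear difference in the inductive step, because the system couples $u$ and $v$: the term $e^{v_{k-1}}(e^{u_{k-1}}-1)-e^{V}(e^{W}-1)$ mixes two monotonicities, and one must be careful that the comparison in the $v$-variable pushes in the right direction given that $e^{u_{k-1}}-1$ has a definite (nonpositive) sign while the comparison in the $u$-variable uses $e^{v_{k-1}}>0$. Concretely I would write the difference as $e^{v_{k-1}}(e^{u_{k-1}}-e^{W})+(e^{v_{k-1}}-e^{V})(e^{W}-1)$ and check each summand: the first is handled by the mean value theorem and the factor $\lambda e^{(\cdot)}-L<0$ against $u_{k-1}-W\geqslant 0$, while the second is nonpositive because $v_{k-1}\geqslant V$ gives $e^{v_{k-1}}-e^{V}\geqslant 0$ and $e^{W}-1\leqslant 0$. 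Once both summands are controlled the maximum principle closes the induction. Finally, letting $k\to\infty$ and using Lemma~\ref{lm3.2} (which identifies the pointwise limit $(u_\Omega,v_\Omega)$) transfers the inequalities $u_k\geqslant W$, $v_k\geqslant V$ to $u_\Omega\geqslant W$, $v_\Omega\geqslant V$, completing the proof.
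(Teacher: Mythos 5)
Your proposal is correct and follows essentially the same route as the paper: a simultaneous induction on $k$ using the maximum principle (Lemma~\ref{lm2}) with $L>2\lambda$, the mean value theorem for the $u$-comparison, and the sign of $e^{W}-1$ (resp.\ $e^{u_k}-1$) for the coupled $v$-comparison, followed by passing to the pointwise limit via Lemma~\ref{lm3.2}. Your add-and-subtract decomposition $e^{v_{k-1}}(e^{u_{k-1}}-e^{W})+(e^{v_{k-1}}-e^{V})(e^{W}-1)$ is an algebraically equivalent variant of the paper's one-step bound $e^{v_k}(e^{u_k}-1)\leqslant e^{V}(e^{u_k}-1)$, and your base-case derivation of $W,V\leqslant 0$ via linearization plus Lemma~\ref{lm2} matches the paper's direct maximum-point contradiction.
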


\begin{proof}
First, one has
$$\Delta W\geqslant\lambda e^V(e^W-1)+g\geqslant\lambda e^V(e^W-1).$$
We claim that $\sup_{x\in\Omega}W(x)\leqslant0$. If not, choose $W(x_0)=\sup_{x\in\Omega}W(x)>0$ for some $x_0\in \Omega$. Then
$$0\geqslant\Delta W(x_0)\geqslant \lambda e^{V(x_0)}(e^{W(x_0)}-1)>0,$$
which yields a contradiction and proves the claim. Similarly, we have $\sup_{x\in\Omega}V(x)\leqslant0$.

Suppose that $W\leqslant u_k$ and $V\leqslant v_k$, then
\begin{equation*}
\begin{aligned}
(\Delta-L)(u_{k+1}-W)&\leqslant \lambda e^{v_k}(e^{u_k}-1)+g-Lu_k-\lambda e^{V}(e^W-1)-g+LW\\
&=\lambda[e^{v_k}(e^{u_k}-1)-e^{V}(e^W-1)]-L(u_k-W)\\
&\leqslant \lambda[e^{V}(e^{u_k}-1)-e^{V}(e^W-1)]-L(u_k-W)\\
&=\lambda e^V(e^{u_k}-e^W)-L(u_k-W)\\&=(\lambda e^{V+\eta_k}-L)(u_k-W)\\&
\leqslant 0,
\end{aligned}
\end{equation*}
where the function $\eta_k$ satisfies $W\leqslant \eta_k\leqslant u_k\leqslant0$. This implies that $W\leqslant u_{k+1}$ by Lemma~\ref{lm2} and similarly we have $V\leqslant v_{k+1}$. So, we have proved this lemma by the induction.
\end{proof}

Finally, we use these lemmas to prove Theorem~\ref{thm:main1}.

\begin{proof}[Proof of Theorem~\ref{thm:main1}]
For any integers $1\leqslant j\leqslant k$, we have $\Omega_j\subset \Omega_k$. On $\overline{\Omega_j}$, since $u^k\leqslant0$ and $v^k\leqslant0$, one easily sees that $u^k$ and $v^k$ satisfies the conditions in Lemma~\ref{lm3.3}, and we obtain
$$u^k\leqslant u^j, \ \ v^k\leqslant v^j \ \ \text{on}\ \overline{\Omega_j}.$$
Let $\widetilde{u^i}$ and $\widetilde{u^i}$ be the null extension of $u^i$ and $v^i$ on $\mathbb{Z}^n$ respectively.
Note that
$$0\geqslant \widetilde{u^1}\geqslant\widetilde{u^2}\geqslant\ldots\geqslant\widetilde{u^k}\geqslant\ldots,$$
$$0\geqslant \widetilde{v^1}\geqslant\widetilde{v^2}\geqslant\ldots\geqslant\widetilde{v^k}\geqslant\ldots$$
on $\mathbb{Z}^n$.

In order to prove the convergence of the sequences $\{\widetilde{u^i}\}$ and $\{\widetilde{v^i}\}$ to a nontrivial limit, we need to give a uniform lower bound of $\{\widetilde{u^i}\}$ and $\{\widetilde{v^i}\}$. We argue by contradiction. Without loss of generality, we may assume that
$$\lim_{i\rightarrow+\infty}\|\widetilde{u^i}\|_{l^\infty}=+\infty.$$
We define
$$A_1^i=\{x\in\Omega_i:\ -B\leqslant \widetilde{u^i}(x)\leqslant0\},$$
$$A_2^i=\{x\in\Omega_i:\ \widetilde{u^i}(x)<-(2n+1)B-\lambda\},$$
$$A_3^i=\{x\in\Omega_i:\ -(2n+1)B-\lambda\leqslant\widetilde{u^i}(x)<-B\}.$$
By condition assumption, we get $A_2^i\neq\emptyset$ when $i\geqslant i_0$ for some $i_0$. In the following, we only consider $i\geqslant i_0$. Following the proof of Lemma~\ref{lm3.2}, we have
$$\sum_{x\in B(\Omega_i)}|\widetilde{u^i}(x)|< B,$$
which yields $B(\Omega_i)\subset A_1^i$ so that $A_1^i\neq \emptyset$. To obtain the uniform $l^\infty$-norm, we show the contradiction in three steps.

$(i)$. We claim that
$$A_3^i\neq\emptyset.$$
Suppose that $A_3^i=\emptyset$, then $A_1^i\cup A_2^i=\Omega_i$, and there exist two vertices $x,y\in \Omega_i$, satisfying
$x\sim y,\ x\in A_1^i,\ y\in A_2^i$. Note that
\begin{equation*}
\begin{aligned}
&\Delta \widetilde{u^i}(x)=\sum_{z\sim x}(\widetilde{u^i}(z)-\widetilde{u^i}(x))\leqslant \ \widetilde{u^i}(y)-2n\widetilde{u^i}(x)\\
&< -(2n+1)B-\lambda+2nB=-B-\lambda,
\end{aligned}
\end{equation*}
and
\begin{equation*}
\begin{aligned}
&|\Delta \widetilde{u^i}(x)|\leqslant |g(x)|+\lambda|e^{\widetilde{v^i}}(1-e^{\widetilde{u^i}})|<B+\lambda.
\end{aligned}
\end{equation*}
This yields a contradiction. Thus $A_3^i\neq\emptyset$.

$(ii)$. We claim that
\begin{equation}				
\label{A2}				
    \lim_{i\rightarrow+\infty}|A_2^i|=+\infty.
\end{equation}	
There exists a sequence $\{a_m\}_{m=0}^\infty$ such that $$a_0=-(2n+1)B-\lambda,\ \ a_m=2na_{m-1}-(B+\lambda),\ \ m\geqslant1.$$
It's easy to obtain that $a_k<a_{k-1} ,\forall k\geqslant1,$ and $   \lim_{m\rightarrow+\infty}a_m=-\infty.$

Suppose that $min_{x\in \Omega_i}\widetilde{u^i}(x)=\widetilde{u^i}(x_0)\in [a_k,a_{k-1}),$ then there must be a sequence of points $\{z_j\}_{j=1}^k\subset\Omega_i,$ such that $\widetilde{u^i}(z_j)\in [a_j,a_{j-1}).$ Otherwise, there exists $j_0\in\mathbb{N},1\leqslant j_0<k,$ such that $$\widetilde{u^i}(x)\notin [a_{j_0},a_{j_0-1}),\ \ \forall x\in \Omega_i.$$ 
Take $x_1\in B(\Omega_i),$ then $\widetilde{u^i}(x_1)>a_0.$ By the connectivity of $\Omega_i$, there is a path:$$x_0=y_0\sim y_1\sim y_2\sim \dots \sim y_p\sim x_1=y_{p+1},\ \ \{y_t\}_{t=1}^p\subset \Omega_i.$$ 
Then there must be a point $y_r\in \{y_t\}_{t=0}^p,$ satisfying $y_{r+1}\geqslant a_{j_0-1}, y_r<a_{j_0}.$ Note that
\begin{equation*}
\begin{aligned}
&\Delta \widetilde{u^i}(y_{r+1})=\sum_{z\sim y_{r+1}}(\widetilde{u^i}(z)-\widetilde{u^i}(y_{r+1}))\leqslant \ \widetilde{u^i}(y_r)-2n\widetilde{u^i}(y_{r+1})\\
&<a_{j_0}-2na_{j_0-1}\\
&=-B-\lambda,
\end{aligned}
\end{equation*}
and
\begin{equation*}
\begin{aligned}
&|\Delta \widetilde{u^i}(x)|\leqslant |g(x)|+\lambda|e^{\widetilde{v^i}}(1-e^{\widetilde{u^i}})|<B+\lambda.
\end{aligned}
\end{equation*}
This yields a contradiction, thus we have $|A_2^i|\geqslant k.$ Since $$\lim_{i\rightarrow+\infty}\|\widetilde{u^i}\|_{l^\infty}=+\infty,$$ we obtain $\lim _{i\rightarrow\infty}|A_2^i|=+\infty.$

$(iii)$. From $(i),(ii)$, we want to prove that
$$\limsup_{i\rightarrow+\infty}|A_3^i|=+\infty.$$
We argue by contradiction.

Suppose that
$$\limsup_{i\rightarrow+\infty}|A_3^i|= N<+\infty.$$
We focus on the set $\Omega_i\setminus A_3^i=A_1^i\cup A_2^i$, which can be divided into a union of finite disjoint connected subsets, i.e.
$$\Omega_i\setminus A_3^i=\bigcup_{j=1}^{l_i}O_{j,i}.$$
Hence
$$\delta O_{j,i}\subset \delta\Omega_i\cup A_3^i.$$

From the proof of $(i)$,
$$O_{j,i}\subset A_1^i\ \text{or}\ O_{j,i}\subset A_2^i.$$
For some $1\leqslant \widetilde{l}_i\leqslant l_i-1$, without loss of generality, we may assume that
$$A_1^i=\bigcup_{j=1}^{\widetilde{l}_i}O_{j,i},A_2^i= \bigcup_{j=\widetilde{l}_i+1}^{l_i}O_{j,i}.$$
Since $B(\Omega_i)\subset A_1^i$, 
$$\delta\Omega_i\cap \delta O_{j,i}=\emptyset,\ \ \mathrm{for}\ \  \widetilde{l}_i+1\leqslant j\leqslant l_i.$$
Thus for $\widetilde{l}_i+1\leqslant j\leqslant l_i$,
$$\delta O_{j,i}\subset A_3^i.$$
For any $x\in\Omega_i$, since $O_{1,i},\ldots,O_{l_i,i}$ are disjoint connected sets and $|\delta \{x\}|=2n$, there are no more than $2n$ sets from the family $\{O_{j,i}\}_{j=1}^{l_i}$ satisfying $x\in \delta O_{j,i}$.

By the isoperimetric inequality in Lemma~\ref{lm:iso}, we have the following estimate
\begin{equation*}
\begin{aligned}
|A_2^i|&= \sum_{j=\widetilde{l}_i+1}^{l_i}|O_{j,i}|\leqslant \left(\frac{1}{C_n}\right)^{\frac{n}{n-1}}\sum_{j=\widetilde{l}_i+1}^{l_i}|\delta O_{j,i}|^{\frac{n}{n-1}}\\
&\leqslant\left(\frac{1}{C_n}\right)^{\frac{n}{n-1}}\left(\sum_{j=\widetilde{l}_i+1}^{l_i}|\delta O_{j,i}|\right)^{\frac{n}{n-1}}
\leqslant \left(\frac{2n}{C_n}\right)^{\frac{n}{n-1}}|A_3^i|^{\frac{n}{n-1}}.
\end{aligned}
\end{equation*}
So,$$\limsup_{i\rightarrow+\infty}|A_2^i|\leqslant \left(\frac{2n}{C_n}\right)^{\frac{n}{n-1}}   \limsup_{i\rightarrow+\infty}|A_3^i|^{\frac{n}{n-1}}\leqslant \left(\frac{2nN}{C_n}\right)^{\frac{n}{n-1}},$$
which contradicts (\ref{A2}).

From (\ref{e:3.2}), we get
\begin{equation*}
\begin{aligned}
&\sum_{x\in B(\Omega_i)}|\widetilde{u^i}(x)|+\lambda\sum_{x\in\Omega_i}e^{\widetilde{v^i}}(1-e^{\widetilde{u^i}})
=\sum_{x\in\Omega_i}g(x),\\
&\sum_{x\in B(\Omega_i)}|\widetilde{v^i}(x)|+\lambda\sum_{x\in\Omega_i}e^{\widetilde{u^i}}(1-e^{\widetilde{v^i}})
=\sum_{x\in\Omega_i}h(x),
\end{aligned}
\end{equation*}
which implies that
$$\sum_{x\in\Omega_i}e^{\widetilde{u^i}}+e^{\widetilde{v^i}}-2e^{\widetilde{u^i}} \cdot e^{\widetilde{v^i}} \leqslant \frac{B}{\lambda}.$$
We write $$a=e^{-(2n+1)B-\lambda},\ \ b=e^{-B},$$ then 
$$e^{\widetilde{u^i}(x)}\in [a,b]\subset (0,1),\ \ \forall x\in A_3^i.$$
By the mean value inequality, we have the following estimate:
\begin{equation*}
\begin{aligned}
\frac{B}{\lambda} & \geqslant \sum_{x\in A_3^i}e^{\widetilde{u^i}}+e^{\widetilde{v^i}}-2e^{\widetilde{u^i}} \cdot e^{\widetilde{v^i}}\\
&\geqslant \sum_{x\in A_3^i}e^{\widetilde{u^i}}+e^{\widetilde{v^i}}-\frac{1}{2}(e^{\widetilde{u^i}}+e^{\widetilde{v^i}})^2\\
&=\sum_{x\in A_3^i}(e^{\widetilde{u^i}}+e^{\widetilde{v^i}})(1-\frac{1}{2}(e^{\widetilde{u^i}}+e^{\widetilde{v^i}}))\\
&\geqslant \sum_{x\in A_3^i}a(1-\frac{1}{2}(1+b))=\frac{1}{2}a(1-b)|A_3^i|.
\end{aligned}
\end{equation*}
This yields a contradiction to $\limsup_{i\rightarrow+\infty}|A_3^i|=+\infty.$ 

Thus, $\{\widetilde{u^i}\}$ and $\{\widetilde{v^i}\}$ have a uniform bound in $l^\infty(\mathbb{Z}^n)$, and we get the pointwise convergence
\begin{equation*}
\begin{aligned}
&\lim_{i\rightarrow+\infty}\widetilde{u^i}(x)=u^*(x),\ \forall x\in\mathbb{Z}^n,\\
&\lim_{i\rightarrow+\infty}\widetilde{v^i}(x)=v^\ast(x),\ \forall x\in\mathbb{Z}^n,
\end{aligned}
\end{equation*}
where $(u^*,v^*)\in l^\infty(\mathbb{Z}^n)\times l^\infty(\mathbb{Z}^n)$ and satisfies the system $(\ref{uve})$ on $\mathbb{Z}^n$. Furthermore, we pass to the limit, and get that for any $i\geqslant1$,
\begin{equation*}
\begin{aligned}
&e^{\inf_{x\in \mathbb{Z}^n}v^*(x)}\sum_{x\in\Omega_i}(1-e^{u^*})\leqslant \sum_{x\in\Omega_i}e^{v^*}(1-e^{u^*})=\lim_{j\rightarrow \infty}\sum_{x\in\Omega_i}e^{\widetilde{v^j}}(1-e^{\widetilde{u^j}})\leqslant\frac{B}{\lambda},\\
&e^{\inf_{x\in \mathbb{Z}^n}u^*(x)}\sum_{x\in\Omega_i}(1-e^{v^*})\leqslant \sum_{x\in\Omega_i}e^{u^*}(1-e^{v^*})=\lim_{j\rightarrow \infty}\sum_{x\in\Omega_i}e^{\widetilde{u^j}}(1-e^{\widetilde{v^j}})\leqslant\frac{B}{\lambda},
\end{aligned}
\end{equation*}
which yields that $(u^*,v^*)$ is a topological solution.

From Lemma \ref{lm3.2}, we have already seen that $u_\Omega\leqslant0$ and $v_\Omega\leqslant0$. Thus, for any $x\in\mathbb{Z}^n$ we have $u^*(x)\leqslant0,\ \ v^*(x)\leqslant0.$

By Lemma~\ref{lm3.3}, we obtain that the solution $(u_\Omega,v_\Omega)$ is maximal. On $\mathbb{Z}^n$, we suppose that there exists another topological solution $(f_1,f_2)$ of (\ref{uvf}). From the proof of Lemma~\ref{lm3.3}, we observe that $f_1\leqslant0$ and $f_2\leqslant 0$ on $\mathbb{Z}^n$. Applying Lemma~\ref{lm3.3} on $\Omega_i$, we have
$$f_1 \leqslant u^i,\ \ \mathrm{and} \ \ f_2 \leqslant v^i.$$
For a fixed integer $k\geqslant1$, and for $i\geqslant k$ we have
\begin{equation*}
\left\{
\begin{aligned}
&f_1(x) \leqslant \lim_{i\rightarrow\infty}u^i(x)=u^*(x),\ \ \forall x\in\Omega_k,\\
&f_2(x) \leqslant \lim_{i\rightarrow\infty}v^i(x)=v^*(x),\ \ \forall x\in\Omega_k.
\end{aligned}
\right.
\end{equation*}
Hence, for $\forall x\in\mathbb{Z}^n$, there exists a sufficiently large integer $k$ satisfying $x\in \Omega_k$ such that $f_1(x)\leqslant u^*(x)$ and $f_2(x)\leqslant v^*(x)$. Thus we obtain $f_1\leqslant u^*$ and $f_2\leqslant v^*$ on $\mathbb{Z}^n$, and the solution $(u^*,v^*)$ is maximal among all possible solutions.

Finally, for any topological solution $(u,v)$ of $(\ref{uvf})$ and $\epsilon\in(0,1)$, we will prove the decay estimate
$$u= O(e^{-m(1-\epsilon) d(x)}),\ \ v= O(e^{-m(1-\epsilon) d(x)}),$$
where $m=\ln(1+\frac{\lambda}{2n})$.
Note that the solution $(u,v)$ satisfies
\begin{equation*}
\left\{
\begin{aligned}
&\Delta u=\lambda e^v(e^u-1)\ \ \text{on}\ \overline{\Omega_0}^c,\\
&\Delta v=\lambda e^u(e^v-1)\ \ \text{on}\ \overline{\Omega_0}^c.
\end{aligned}
\right.
\end{equation*}
Since
$$\lim_{d(x)\rightarrow+\infty}u(x) =0,\ \ \lim_{d(x)\rightarrow+\infty}v(x) =0,$$
for any $0<\epsilon<1$, we can choose $R\geqslant1$ sufficiently large such that
$$\lambda e^{u(x)+v(x)}\geqslant 2n\left[\left(1+\frac{\lambda}{2n}\right)^{1-\epsilon}-1\right],\ \ d(x)\geqslant R.$$
Then for $d(x)\geqslant R$,
\begin{equation*}
\begin{aligned}
&\Delta u=\lambda e^v(e^u-1)=\lambda e^{v+\xi}u\leqslant \lambda e^{u+v}u \leqslant c_3 u,\\
&\Delta v=\lambda e^u(e^v-1)=\lambda e^{u+\eta}v\leqslant \lambda e^{u+v}v \leqslant c_3 v,
\end{aligned}
\end{equation*}
where $c_3=2n\left[\left(1+\frac{\lambda}{2n}\right)^{1-\epsilon}-1\right],$ the functions $\xi$ and $\eta$ satisfy $u\leqslant\xi\leqslant 0,\ \ v\leqslant \eta \leqslant0.$

Consider the function $h(x)= -e^{-m(1-\epsilon)d(x)}$. For $x\in \overline{\Omega_0}^c$ and $d(x)\geqslant R$, suppose that $d(x)=t\geqslant R\geqslant1$, and we have
$$\Delta h(x)=\sum_{y\sim x}(h(y)-h(x))=\sum_{i=1}^n(h(x+e_i)+h(x-e_i)-2h(x)).$$
If $x_i\neq0$, then
$$h(x+e_i)+h(x-e_i)-2h(x)=-e^{-m(1-\epsilon)(t-1)}-e^{-m(1-\epsilon)(t+1)}+2e^{-m(1-\epsilon)t}.$$
If $x_i=0$, we have
\begin{equation*}
\begin{aligned}
h(x+e_i)+h(x-e_i)-2h(x)&=-2e^{-m(1-\epsilon)(t+1)}+2e^{-m(1-\epsilon)t}\\
&\geqslant -e^{-m(1-\epsilon)(t-1)}-e^{-m(1-\epsilon)(t+1)}+2e^{-m(1-\epsilon)t}.
\end{aligned}
\end{equation*}
Therefore, we have the following inequality
\begin{equation*}
\begin{aligned}
\Delta h(x)&\geqslant n\left[-e^{-m(1-\epsilon)(t-1)}-e^{-m(1-\epsilon)(t+1)}+2e^{-m(1-\epsilon)t}\right]\\
&=n\left[e^{-m(1-\epsilon)}+e^{m(1-\epsilon)}-2\right]h(x)\\
&=n\left[\frac{1}{1+\frac{c_3}{2n}}+\left(1+\frac{c_3}{2n}\right)-2\right]h(x)\\
&\geqslant n\left[2\left(1+\frac{c_3}{2n}\right)-2\right]h(x)=c_3h(x).
\end{aligned}
\end{equation*}
Fix a subset
$$\Omega_0'=\{x\in\mathbb{Z}^n: d(x)\geqslant R_1\geqslant R\},$$
which satisfies $\Omega_0'\cap \overline{\Omega_0}=\emptyset$. By choosing a large constant $C(\epsilon)$, we obtain
\begin{equation*}
\left\{
\begin{aligned}
&(\Delta-c_3)(C(\epsilon)h-u)\geqslant0\ \ \text{on}\ \Omega_0',\\
&(\Delta-c_3)(C(\epsilon)h-v)\geqslant0\ \ \text{on}\ \Omega_0',\\
&\lim_{d(x)\rightarrow+\infty}(C(\epsilon)h-u)(x)=0,\\
&\lim_{d(x)\rightarrow+\infty}(C(\epsilon)h-v)(x)=0,\\
&C(\epsilon)h(x)-u(x)\leqslant 0 \ \ \text{if}\ d(x)=R_1,\\
&C(\epsilon)h(x)-v(x)\leqslant 0 \ \ \text{if}\ d(x)=R_1.
\end{aligned}
\right.
\end{equation*}
These imply that
\begin{equation*}
\left\{
\begin{aligned}
&0\geqslant u(x)\geqslant -C(\epsilon) e^{-m(1-\epsilon)d(x)}\ \ \text{on}\ \Omega_0',\\
&0\geqslant v(x)\geqslant -C(\epsilon) e^{-m(1-\epsilon)d(x)}\ \ \text{on}\ \Omega_0',
\end{aligned}
\right.
\end{equation*}
and complete the proof of Theorem~\ref{thm:main1}. As a consequence, we obtain $(u,v)\in l^p(\mathbb{Z}^n)\times l^p(\mathbb{Z}^n)$, $1\leqslant p\leqslant\infty$.
\end{proof}

\section{Proof of Theorem 1.2}\label{section-Proof of theorem 1.2}

\begin{proof}[Proof of Theorem~\ref{thm1.2}]
For a fixed pair of functions $g$ and $h$, let $(u_\lambda,v_\lambda)$ be a topological solution of $(\ref{uvf})$.

$(a).$By Theorem \ref{thm:main1}, we know 
    $$u_\lambda= O(e^{-m(1-\epsilon) d(x)}),v_\lambda= O(e^{-m(1-\epsilon) d(x)}).$$
So, by Green's identity, we obtain 

\begin{equation*}
\begin{aligned}
&\lambda\sum_{x\in\mathbb{Z}^n}e^{v_\lambda}(1-e^{u_\lambda})
=\sum_{x\in\mathbb{Z}^n}g(x),\\ &\lambda\sum_{x\in\mathbb{Z}^n}e^{u_\lambda}(1-e^{v_\lambda})
=\sum_{x\in\mathbb{Z}^n}h(x).
\end{aligned}
\end{equation*}
This implies that
$$\sum_{x\in\mathbb{Z}^n}\left(e^{u_\lambda}+e^{v_\lambda}-2e^{u_\lambda} \cdot e^{v_\lambda}\right) \leqslant \frac{B}{\lambda}.$$
By the mean value inequality, we have
$$\sum_{x\in\mathbb{Z}^n}\left[e^{u_\lambda}+e^{v_\lambda}-\frac{1}{2}(e^{u_\lambda}+e^{v_\lambda})^2\right]\leqslant \frac{B}{\lambda}.$$
Hence, there holds 
\begin{equation*}
\begin{aligned}
&e^{u_\lambda(x)}+e^{v_\lambda(x)}\leqslant1-\sqrt{1-\frac{2B}{\lambda}} \ \ \text{or} \ \  e^{u_\lambda(x)}+e^{v_\lambda(x)}\geqslant1+\sqrt{1-\frac{2B}{\lambda}}, \ \ \ \ \forall x\in\mathbb{Z}^n 
\\ \Rightarrow &e^{u_\lambda(x)}\leqslant1-\sqrt{1-\frac{2B}{\lambda}} \ \ \text{or} \ \  e^{u_\lambda(x)}\geqslant\sqrt{1-\frac{2B}{\lambda}}, \ \ \ \ \forall x\in\mathbb{Z}^n
\\ \Rightarrow &u_\lambda(x)\leqslant\ln(1-\sqrt{1-\frac{2B}{\lambda}}) \ \ \text{or} \ \  u_\lambda(x)\geqslant\ln(\sqrt{1-\frac{2B}{\lambda}}), \ \ \ \ \forall x\in\mathbb{Z}^n.
\end{aligned}
\end{equation*}

We claim that if $\lambda>2B(2n+e^{4B})$, then 
\begin{equation}
\label{e:ln}
\begin{aligned}
u_\lambda(x)\geqslant\ln(\sqrt{1-\frac{2B}{\lambda}}), \ \ \ \ \forall x\in\mathbb{Z}^n.
\end{aligned}
\end{equation}
We argue by contradiction.

By the connectivity and $\lim_{d(x)\rightarrow+\infty}u(x)=0$, we can find $x_1\sim x_2$ which satisfies $$u_\lambda(x_1)\geqslant\ln(\sqrt{1-\frac{2B}{\lambda}})\ \ \text{and} \ \ u_\lambda(x_2)\leqslant\ln(1-\sqrt{1-\frac{2B}{\lambda}}).$$
 Note that
\begin{equation*}
\begin{aligned}
&\Delta u_\lambda(x_1)\leqslant u_\lambda(x_2)-2nu_\lambda(x_1)
\end{aligned}
\end{equation*}
and
\begin{equation*}
\begin{aligned}
&|\Delta u_\lambda(x)|\leqslant |g(x)|+\lambda e^{v_\lambda(x)}(1-e^{u_\lambda(x)})\leqslant B+\lambda\sum_{x\in\mathbb{Z}^n}e^{v_\lambda}(1-e^{u_\lambda})\leqslant 2B,
\end{aligned}
\end{equation*}
thus we have 
\begin{equation*}
\begin{aligned}
&2nu_\lambda(x_1)\leqslant u_\lambda(x_2)+2B\\
\Rightarrow&2n\ln(\sqrt{1-\frac{2B}{\lambda}})\leqslant \ln(1-\sqrt{1-\frac{2B}{\lambda}})+2B\leqslant \ln(\sqrt{\frac{2B}{\lambda}})+2B\\
\Rightarrow&(1-\frac{2B}{\lambda})^{2n}\leqslant\frac{2B}{\lambda}e^{4B}.
\end{aligned}
\end{equation*}
Since $$(1-\frac{2B}{\lambda})^{2n}\geqslant1-2n\cdot\frac{2B}{\lambda},$$
we obtain 
\begin{equation*}
\begin{aligned}
&1-2n\cdot\frac{2B}{\lambda}\leqslant\frac{2B}{\lambda}e^{4B}\\
\Rightarrow&\lambda\leqslant2B(2n+e^{4B}),
\end{aligned}
\end{equation*}
this yields a contradiction to $\lambda>2B(2n+e^{4B}).$ From the above discussion, we have proved claim $(\ref{e:ln})$.

Similarly we have $$v_\lambda(x)\geqslant\ln(\sqrt{1-\frac{2B}{\lambda}}), \ \ \ \ \forall x\in\mathbb{Z}^n.$$
So, we obtain $$u_\lambda(x)+v_\lambda(x)\geqslant\ln(1-\frac{2B}{\lambda}), \ \ \ \ \forall x\in\mathbb{Z}^n.$$

$(b).$ Let $(u_\lambda^\ast ,v_\lambda^\ast)$ be the maximal topological solution of $(\ref{uvf})$ established in Theorem \ref{thm:main1}. We first prove that the map $\lambda \mapsto (u_\lambda^\ast ,v_\lambda^\ast)$
 is monotone.

For $\lambda_2>\lambda_1>0,$ it is obvious to see that
\begin{equation*}
\left\{
\begin{aligned}
&\Delta u_{\lambda_1}^\ast=\lambda_1e^{v_{\lambda_1}^\ast}(e^{u_{\lambda_1}^\ast}-1)+g\geqslant \lambda_2e^{v_{\lambda_1}^\ast}(e^{u_{\lambda_1}^\ast}-1)+g,\\
&\Delta v_{\lambda_1}^\ast=\lambda_1e^{u_{\lambda_1}^\ast}(e^{v_{\lambda_1}^\ast}-1)+h\geqslant \lambda_2e^{u_{\lambda_1}^\ast}(e^{v_{\lambda_1}^\ast}-1)+h.
\end{aligned}
\right.
\end{equation*}
From Lemma \ref{lm3.3} for $\lambda=\lambda_2$ and the process of taking limits, we conclude that $u_{\lambda_1}^\ast\leqslant u_{\lambda_2}^\ast$ and $v_{\lambda_1}^\ast\leqslant v_{\lambda_2}^\ast$ on $\mathbb{Z}^n$.
Next we shall divide our discussion into two cases:

 Case $1.$ We consider the asymptotic behaviors of topological solutions $(u_{\lambda},v_{\lambda})$ of $(\ref{uve})$ on $\mathbb{Z}^2.$

If $g\equiv0$, then
\begin{equation*}
\left\{
\begin{aligned}
& \Delta (-u_{\lambda})=\lambda e^{v}(1-e^{u})\geqslant0\  \ \ \text{in}\ \mathbb{Z}^2,\\
& \lim_{d(x)\rightarrow+\infty}-u_{\lambda}(x)=0 .\\
\end{aligned}
\right.
\end{equation*}
By Lemma \ref{lm2} and Theorem \ref{thm:main1}, we have $u_{\lambda}\equiv0.$ Thus 
$$\lim_{\lambda\rightarrow0_+}u_\lambda(x)=0,\forall x\in\mathbb{Z}^2.$$
Similarly, if $h\equiv0,$ we have $$\lim_{\lambda\rightarrow0_+}v_\lambda(x)=0,\forall x\in\mathbb{Z}^2.$$

If $g\not\equiv0$. Suppose that there exists $u\in C(\mathbb{Z}^2)$ such that 
$$u_{\lambda}^\ast(x)\rightarrow u(x) \ \ as \ \ \lambda\rightarrow 0_+$$
for all $x\in \mathbb{Z}^2.$ It is obvious that
\begin{equation*}
\left\{
\begin{aligned}
&\Delta u(x)=g(x) \  &\forall x\in \mathbb{Z}^2,\\
&u(x)<0 \ \ \ &\forall x\in \mathbb{Z}^2.
\end{aligned}
\right.
\end{equation*}

For $d\in\mathbb{N}_+$, let
$$Q_d=\{ (x_1,x_2)\in\mathbb{Z}^2:\ |x_1|\leqslant d,\ |x_2|\leqslant d.\}$$
One can find $d_0\in\mathbb{N}_+$ such that $\Omega_0\subset Q_{d_0}.$
For any positive integer $d>d_0$, we set $\omega_d$
to be the unique solution of the following equation
\begin{equation*}
\left\{
\begin{aligned}
\Delta \omega_d=g \ \ \ \ &\mathrm{in}\  Q_d,\\
\omega_d=0 \ \ \ \ \ \  \ &\mathrm{on}\  \delta Q_d.
\end{aligned}
\right.
\end{equation*}
Obviously, $0\geqslant \omega_d\geqslant u,\ \ \mathrm{in}\ Q_d.$

By Lemma~\ref{lm1} and Cauchy inequality, we obtain
\begin{equation*}
\begin{aligned}
&-4\pi\sum_{j=1}^{k_1}m_j\omega_d(p_j)
=-\int_{Q_d}4\pi\sum_{j=1}^{k_1}m_j\delta_{p_j}\omega_d
=-\int_{Q_d}(\Delta\omega_d)\omega_d
=D_{Q_d}(\omega_d)\\
&=\frac{1}{2}\sum_{\substack{x,y\in Q_d\\ x\sim y}}(\omega_d(x)-\omega_d(y))^2+\sum_{\substack{x\in Q_d,y\in \delta Q_d\\ x\sim y}}(\omega_d(x)-\omega_d(y))^2\\
&\geqslant\sum_{i\geqslant d_0}^d \ \ \sum_{\substack{x\in Q_{i-1},y\in \delta Q_i\\ x\sim y}}(\omega_d(x)-\omega_d(y))^2
=\sum_{i\geqslant d_0}^d \sum_{y\in\delta Q_i}(\frac{\partial \omega_d}{\partial \vec{n}}(y))^2\\
&\geqslant\sum_{i\geqslant d_0}^d [\frac{1}{8i+4}(\int_{Q_i}\Delta\omega_d)^2]
=(4\pi\sum_{j=1}^{k_1}m_j)^2\sum_{i\geqslant d_0}^d\frac{1}{8i+4}.
\end{aligned}
\end{equation*}
Since $g \not\equiv 0$, we can get that
$$4\pi\sum_{j=1}^{k_1}m_j\omega_d(p_j)\rightarrow -\infty \ \ \mathrm{as} \ d\rightarrow +\infty,$$
this constradicts $0\geqslant \omega_d\geqslant u.$

So, we can find $x_0\in\mathbb{Z}^2$ such that
$$u_{\lambda}^\ast(x_0)\rightarrow-\infty \ \ as \ \ \lambda\rightarrow 0_+.$$
 If $y\sim x_0$, note that
\begin{equation*}
\begin{aligned}
&\Delta u_{\lambda}^\ast(y)\leqslant u_{\lambda}^\ast(x_0)-2nu_{\lambda}^\ast(y)
\end{aligned}
\end{equation*}
and
\begin{equation*}
\begin{aligned}
&|\Delta u_{\lambda}^\ast(x)|\leqslant |g(x)|+\lambda e^{v_{\lambda}^\ast(x)}(1-e^{u_{\lambda}^\ast(x)})\leqslant B+\lambda\sum_{x\in\mathbb{Z}^n}e^{v_{\lambda}^\ast(x)}(1-e^{u_{\lambda}^\ast(x)})\leqslant 2B,
\end{aligned}
\end{equation*}
then we have 
$$\lim_{\lambda\rightarrow 0_+}u_{\lambda}^\ast(y)\leqslant \lim_{\lambda\rightarrow 0_+}\frac{1}{2n}(u_{\lambda}^\ast(x_0)+2B)=-\infty.$$
By the connectivity, for all $x\in \mathbb{Z}^2,$ we have
$$u_{\lambda}^\ast(x)\rightarrow-\infty \ \ \mathrm{as} \ \lambda\rightarrow 0_+.$$
Similarly, if $h\not\equiv0$, $$v_{\lambda}^\ast(x)\rightarrow-\infty \ \ \mathrm{as} \ \lambda\rightarrow 0_+.$$

Case $2.$ We consider the asymptotic behaviors of topological solutions $(u_{\lambda},v_{\lambda})$ of $(\ref{uve})$ on $\mathbb{Z}^n,\ n\geqslant3.$ 
Let $\psi_n(x)=4\pi\sum\limits_{j=1}^{k_1}m_jG_n(x-p_j), \ x\in \mathbb{Z}^n.$ Applying Lemma \ref{lm:Green function}, we deduce that 
\begin{equation}
\label{e:multgreenfunction}
\left\{
\begin{aligned}
& \Delta \psi_n=4\pi\sum\limits_{j=1}^{k_1}m_j\delta_{p_j}=g\  \ \text{in}\ \mathbb{Z}^n,\\
& \lim_{d(x)\rightarrow+\infty}\psi_n(x)=0 .\\
\end{aligned}
\right.
\end{equation}
Furthermore
\begin{equation*}
\left\{
\begin{aligned}
& \Delta (\psi_n-u_\lambda)=\lambda e^{v_\lambda}(1-e^{u_\lambda})\geqslant0\  \ \ \  \text{in}\ \mathbb{Z}^n,\\
& \lim_{d(x)\rightarrow+\infty}(\psi_n-u_\lambda)(x)=0 .\\
\end{aligned}
\right.
\end{equation*}
Hence, we deduce from Corollary \ref{co1} that 
\begin{equation}
\label{monotonicity}
\begin{aligned}
0\geqslant u_\lambda^\ast(x) \geqslant u_\lambda(x)\geqslant\psi_n(x), \ \ \ \forall x\in\mathbb{Z}^n.
\end{aligned}
\end{equation}
From $(\ref{monotonicity})$ and the monotonicity of $u_{\lambda}^\ast(x)$ with respect to $\lambda$, we can find $u\in C(\mathbb{Z}^n)$ such that  
\begin{equation}
\label{limittou}
\begin{aligned}
\lim_{\lambda\rightarrow0_+}u_{\lambda}^\ast(x)=u(x),\ \ \ \forall x\in\mathbb{Z}^n.
\end{aligned}
\end{equation}
and
\begin{equation}
\label{uistopological}
\begin{aligned}
0\geqslant u(x)\geqslant\psi_n(x), \ \ \ \forall x\in\mathbb{Z}^n.
\end{aligned}
\end{equation}
Using $(\ref{e:multgreenfunction}),(\ref{limittou})$ and $(\ref{uistopological})$, we obtain
\begin{equation*}
\left\{
\begin{aligned}
&\Delta (u-\psi_n)(x)=0, \ \ \ \forall x\in\mathbb{Z}^n,\\
&\lim_{d(x)\rightarrow+\infty}(u-\psi_n)(x)=0.
\end{aligned}
\right.
\end{equation*}
By Corollary \ref{Extre}, we known $u=\psi_n.$ 

Thus, applying $(\ref{monotonicity})$ in the above we obtain that
$$\psi_n(x)\leqslant\lim_{\lambda\rightarrow0_+}u_\lambda(x)\leqslant\lim_{\lambda\rightarrow0_+}u_\lambda^\ast(x)=\psi_n(x), \ \ \forall x\in\mathbb{Z}^n.$$
Then $$\lim_{\lambda\rightarrow0_+}u_\lambda(x)=\psi_n(x), \ \ \forall x\in\mathbb{Z}^n.$$
Similarly, we have 
$$\lim_{\lambda\rightarrow0_+}v_\lambda(x)=4\pi\sum\limits_{j=1}^{k_2}n_jG_n(x-q_j), \ \ \forall x\in\mathbb{Z}^n.$$
\end{proof}

\section{Proof of Theorem 1.3}\label{section-Proof of theorem 1.3} 
\begin{lemma}\label{lm:small enough}
$$||G_n||_{l^\infty(\mathbb{Z}^n)}\rightarrow0\ \ \ \ \mathrm{as}\ n\rightarrow+\infty.$$
\end{lemma}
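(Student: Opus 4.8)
The plan is to reduce the $l^\infty$-norm to the value of $G_n$ at the origin, represent $G_n(0)$ as a one-dimensional integral of an $n$-th power via the Laplace transform, and then pass to the limit $n\to\infty$ by dominated convergence. First I would observe that the supremum of $|G_n|$ is attained at $x=0$: from the representation formula $(\ref{e:Green function})$, for every $x\in\mathbb{Z}^n$ the triangle inequality together with $|e^{iz\cdot x}|=1$ gives
$$|G_n(x)|\le \frac{1}{(2\pi)^n}\int_{[-\pi,\pi]^n}\frac{dz}{2n-2\sum_{j=1}^n\cos z_j}=|G_n(0)|,$$
since the denominator is positive for a.e.\ $z$ and integrable for $n\ge 3$. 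Hence $\|G_n\|_{l^\infty(\mathbb{Z}^n)}=|G_n(0)|$, and it suffices to prove $|G_n(0)|\to 0$.

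Next I would rewrite $2n-2\sum_{j}\cos z_j=2\sum_{j=1}^n(1-\cos z_j)$ and apply the elementary identity $\frac1s=\int_0^\infty e^{-ts}\,dt$ for $s>0$. Since all integrands are nonnegative, Tonelli's theorem allows the interchange of integrals, and the resulting $n$-fold integral factorizes over the coordinates:
$$|G_n(0)|=\frac12\int_0^\infty\left(\frac{1}{2\pi}\int_{-\pi}^\pi e^{-t(1-\cos z)}\,dz\right)^{\!n}dt=\frac12\int_0^\infty\psi(t)^n\,dt,$$
where $\psi(t):=\frac{1}{2\pi}\int_{-\pi}^\pi e^{-t(1-\cos z)}\,dz$.

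Then I would record three properties of $\psi$. First, $0\le\psi(t)\le 1$ for all $t\ge0$, with strict inequality $\psi(t)<1$ for $t>0$ (the integrand is $\le 1$, with equality only at $z=0$); consequently $\psi(t)^n\to 0$ pointwise for each fixed $t>0$. Second, the decay bound $\psi(t)\le C t^{-1/2}$, obtained from the elementary estimate $1-\cos z\ge \frac{2}{\pi^2}z^2$ on $[-\pi,\pi]$ followed by a Gaussian comparison. This shows $\psi^3\in L^1(0,\infty)$, since $\psi^3\le 1$ near the origin while $\psi(t)^3\le C^3 t^{-3/2}$ for large $t$. Finally, for $n\ge 3$ the inequality $\psi(t)^n\le\psi(t)^3$ (valid because $0\le\psi\le1$) furnishes an $n$-independent integrable dominating function, so the dominated convergence theorem gives $\int_0^\infty\psi(t)^n\,dt\to 0$, whence $\|G_n\|_{l^\infty(\mathbb{Z}^n)}=|G_n(0)|\to 0$.

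The main obstacle is securing a dominating function independent of $n$: the pointwise convergence $\psi(t)^n\to 0$ by itself does not suffice, and the integrable domination is precisely where the transient nature of the problem enters, through the $t^{-1/2}$ decay of $\psi$ (the one-dimensional heat-kernel estimate in each coordinate). Once the bound $\psi(t)\le C t^{-1/2}$ is established, the conclusion follows immediately.
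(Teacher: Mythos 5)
Your proof is correct, and it takes a genuinely different route from the paper's. For the reduction to the origin you use the triangle inequality in the Fourier representation ($|e^{iz\cdot x}|=1$, the kernel being positive and integrable for $n\geqslant3$), so $\|G_n\|_{l^\infty(\mathbb{Z}^n)}=|G_n(0)|$ in one line; the paper instead proves $G_n(0)=\min_{x\in\mathbb{Z}^n}G_n(x)$ by a discrete maximum-principle path argument, which needs $G_n\leqslant0$ and the decay at infinity to guarantee the minimum is attained --- your reduction sidesteps that entirely. For the limit itself, the paper estimates the $n$-fold integral directly: it uses $1-\cos z_j\geqslant\frac{2}{\pi^2}z_j^2$, splits $[-\pi,\pi]^n$ into a ball $B(0,l)$ and its complement, bounds the ball contribution by $\frac{l^{n-2}}{(n-2)2^{n+1}\pi^{n/2-2}\Gamma(n/2)}$, which tends to $0$ for fixed $l$ because of the Gamma-function growth, bounds the remainder by $\frac{\pi^2}{4l^2}$, and lets $l\to+\infty$ at the end. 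You instead subordinate via $\frac{1}{s}=\int_0^\infty e^{-ts}\,\mathrm{d}t$, justify the interchange by Tonelli (nonnegative integrand), and factorize over coordinates to get $|G_n(0)|=\frac12\int_0^\infty\psi(t)^n\,\mathrm{d}t$ with $\psi(t)=\frac{1}{2\pi}\int_{-\pi}^\pi e^{-t(1-\cos z)}\,\mathrm{d}z$ (the one-dimensional continuous-time heat kernel $e^{-t}I_0(t)$ at the origin), then conclude by dominated convergence: $0\leqslant\psi(t)<1$ for $t>0$ gives the pointwise decay of $\psi^n$, and $\psi^n\leqslant\psi^3\in L^1(0,\infty)$ thanks to the Gaussian bound $\psi(t)\leqslant Ct^{-1/2}$, which rests on the same cosine inequality the paper uses. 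Your constants check out ($s=2\sum_j(1-\cos z_j)$ and the substitution produce exactly the factor $\frac12$), and the $n$-independent dominant $\psi^3$ is precisely the point where transience ($n\geqslant3$) enters, mirroring the role of local integrability of $|z|^{-2}$ in the paper's version. What each approach buys: the paper's argument is entirely elementary (volume and surface-area estimates, no interchange of integrals) with an explicit two-parameter bound; yours is structurally cleaner, exhibits $G_n(0)$ as a subordinated heat-kernel integral familiar from random-walk theory, and lends itself more directly to a quantitative rate in $n$ should one be wanted.
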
	
\begin{proof}
By the definition of $G_{n}$ and Corollary \ref{co1}, we get that $\lim_{d(x)\rightarrow+\infty}G_n(x)=0$ and $G_n(x) \leqslant0 \ \mathrm{in} \ \mathbb{Z}^n$. Furthermore, the Green’s function $G_n(x)$ can attain its minimum in $\mathbb{Z}^n.$

We claim that $$G_n(0)=\min_{x\in\mathbb{Z}^n}G_n(x).$$
If it is not true, there exists a path $y_0=0\sim y_1\sim y_2\sim\dots\sim y_p, \ p\geqslant1,$ such that 
$$G_n(y_p)=\min_{x\in\mathbb{Z}^n}G_n(x)<G(0),\ \mathrm{and} \ G_n(y_p)<G_n(y_{p-1}).$$
Thus
$$0=\delta_0(y_p)=\Delta G_n(y_p)=\sum_{y\sim y_p}(G_n(y)-G_n(y_p))\geqslant G_n(y_{p-1})-G_n(y_p)>0,$$
which is a contradiction.

From the Lemma \ref{lm:Green function}, we obtain that
$$|G_n(0)|=\frac{1}{(2\pi)^n} \int_{[-\pi,\pi]^n} \frac{1}{2n -2 \sum_{j=1}^n \cos z_j} \mathrm{d} z.$$
Note that the surface area of the $n$-dimensional unit ball is $$\omega_n=\frac{2\pi^{\frac{n}{2}}}{\Gamma(\frac{n}{2})}$$ and $$1-\cos{z_j}=2\sin^2{\frac{z_j}{2}}\geqslant\frac{2}{\pi^2}z_j^2.$$
For any fixed $l>0$, we have
\begin{equation*}
\begin{aligned}
&\frac{1}{(2\pi)^n} \int_{[-\pi,\pi]^n} \frac{1}{2n -2 \sum_{j=1}^n \cos z_j} \mathrm{d} z\\
\leqslant&\frac{1}{(2\pi)^n} \int_{[-\pi,\pi]^n} \frac{1}{\frac{4}{\pi^2} \sum_{j=1}^n z_j^2} \mathrm{d} z\\
\leqslant&\frac{1}{(2\pi)^n} \int_{B(0,l)}\frac{1}{\frac{4}{\pi^2} \sum_{j=1}^n z_j^2} \mathrm{d} z+\frac{1}{(2\pi)^n} \int_{[-\pi,\pi]^n\backslash B(0,l)}\frac{1}{\frac{4}{\pi^2} \sum_{j=1}^n z_j^2} \mathrm{d} z\\
\leqslant&\frac{\pi^2}{4}\frac{1}{(2\pi)^n}\int_0^l \omega_nr^{n-3} \mathrm{d} r+\frac{\pi^2}{4}\frac{1}{(2\pi)^n}\int_{[-\pi,\pi]^n}\frac{1}{l^2}\mathrm{d}z\\
=&\frac{l^{n-2}}{(n-2)2^{n+1}\pi^{\frac{n}{2}-2}\Gamma(\frac{n}{2})}+\frac{\pi^2}{4l^2}.
\end{aligned}
\end{equation*}
Note that $\Gamma(\frac{n}{2})\geqslant [\frac{n}{2}]!$ for $n\geqslant4,$ thus 
$$\limsup_{n\rightarrow+\infty}|G_n(0)|\leqslant \frac{\pi^2}{4l^2}.$$
By taking $l\rightarrow+\infty,$ we finish the proof.
\end{proof}

\begin{proof}[Proof of Theorem~\ref{thm1.3}]
Let $\eta_n(x)=4\pi\sum\limits_{j=1}^{k_2}n_jG_n(x-q_j), \ x\in \mathbb{Z}^n,$ $n\geqslant3.$ Then, from Lemma \ref{lm:small enough} and the proof of (\ref{thm1.2})$(b)$, we obtain that
\begin{equation*}
\left\{
\begin{aligned}
& 0\geqslant u_\lambda^\ast \geqslant u_\lambda\geqslant\psi_n \ \ \ 
 \ \ \mathrm{on} \ \mathbb{Z}^n,\\
 &0\geqslant v_\lambda^\ast \geqslant v_\lambda\geqslant\eta_n \ \ \ 
 \ \ \mathrm{on} \ \mathbb{Z}^n,\\
 &||\psi_n||_{l^\infty(\mathbb{Z}^n)}\rightarrow0\ \ \ \ \mathrm{as}\ n\rightarrow+\infty,\\
 &||\eta_n||_{l^\infty(\mathbb{Z}^n)}\rightarrow0\ \ \ \ \mathrm{as}\ n\rightarrow+\infty.
\end{aligned}
\right.
\end{equation*}
By Theorem (\ref{thm1.2})$(a)$, there exist two constants $N(g,h)$ and $\lambda(g,h)$ related to $g,h$ such that if $n\geqslant N(g,h)$ or $\lambda\geqslant \lambda(g,h)$, then 
\begin{equation*}
\left\{
\begin{aligned}
& 0\geqslant u_\lambda^\ast \geqslant u_\lambda\geqslant\ln{\frac{1}{2}} \ \ \ 
 \ \ \mathrm{on} \ \mathbb{Z}^n,\\
 &0\geqslant v_\lambda^\ast \geqslant v_\lambda\geqslant\ln{\frac{1}{2}} \ \ \ 
 \ \ \mathrm{on} \ \mathbb{Z}^n.
\end{aligned}
\right.
\end{equation*}

Note that
\begin{equation*}
\begin{aligned}
\Delta (u_\lambda^\ast-u_\lambda)&=\lambda(e^{u_\lambda^\ast+v_\lambda^\ast}-e^{u_\lambda+u_\lambda}-e^{v_\lambda^\ast}+e^{v_\lambda})\\
&=\lambda[e^{v_\lambda^\ast}(e^{u_\lambda^\ast}-e^{u_\lambda})+e^{v_\lambda^\ast+u_\lambda}-e^{v_\lambda+u_\lambda}-(e^{v_\lambda^\ast}-e^{v_\lambda})]\\
&=\lambda[e^{v_\lambda^\ast}(e^{u_\lambda^\ast}-e^{u_\lambda})+(e^{v_\lambda^\ast}-e^{v_\lambda})(e^{u_\lambda}-1)],
\end{aligned}
\end{equation*}
similarly, we get
$$\Delta (v_\lambda^\ast-v_\lambda)=\lambda[e^{u_\lambda^\ast}(e^{v_\lambda^\ast}-e^{v_\lambda})+(e^{u_\lambda^\ast}-e^{u_\lambda})(e^{v_\lambda}-1)].$$
Set $\varphi=u_\lambda^\ast+v_\lambda^\ast-u_\lambda-v_\lambda,$ and suppose $n\geqslant N(g,h)$ or $\lambda\geqslant \lambda(g,h)$, then
\begin{equation*}
\left\{
\begin{aligned}
&\Delta\varphi=\lambda[(e^{u_\lambda^\ast}-e^{u_\lambda})(e^{v_\lambda^\ast}+e^{v_\lambda}-1)+(e^{v_\lambda^\ast}-e^{v_\lambda})(e^{u_\lambda^\ast}+e^{u_\lambda}-1)]\geqslant0,\\
&\varphi(x)\rightarrow0 \ \ \ \ \mathrm{as}\ d(x)\rightarrow+\infty,\\
&\varphi\geqslant0.
\end{aligned}
\right.
\end{equation*}
We claim that $\varphi\equiv 0.$ If not, there exists $x_0$ such that $$\varphi(x_0)=\max_{x\in\mathbb{Z}^n}\varphi(x)>0.$$
Then $$0\leqslant\Delta\varphi(x_0)\leqslant \varphi(y)-\varphi(x_0)\leqslant0$$
for any $y\sim x_0,$ which implies that $\varphi(x)=\varphi(x_0),\ \forall x\in\mathbb{Z}^n.$ This contradicts to $\varphi(x)\rightarrow0 \ \mathrm{as}\ d(x)\rightarrow+\infty.$

So, $\varphi\equiv 0,$ that is $u_\lambda^\ast= u_\lambda$ and $v_\lambda^\ast= v_\lambda.$ 
This finishes the proof of Theorem~\ref{thm1.3}.
\end{proof}

\bibliographystyle{alpha}
\bibliography{references} 

\end{document}